\newtheorem{remark}{Remark}
\def\dref#1{(\ref{#1})}
 \def\dfrac{\displaystyle\frac}
\def\be{\begin{equation}}
\def\bel{\begin{equation}\label}
\def\ee{\end{equation}}
\def\ba{\begin{array}}
\def\ea{\end{array}}
\def\banl{\begin{eqnarray}\label}
\def\ean{\end{eqnarray}}
 \def\bna{\begin{eqnarray}}
\def\ena{\end{eqnarray}} \def\dref#1{(\ref{#1})}
\title{IS IT POSSIBLE TO STABILIZE  DISCRETE-TIME PARAMETERIZED UNCERTAIN SYSTEMS GROWING EXPONENTIALLY FAST?\thanks{This work was supported in part by the National Natural Science Foundation of China under Grants 61422308 and 11688101.}}
\author{Zhaobo Liu\thanks{Key Laboratory of Systems and Control, Academy of Mathematics and Systems Science, Chinese Academy of Sciences, 100190, P.~R.~China, and  School of Mathematical Sciences, University of Chinese Academy of Sciences, Beijing 100049, P.~R.~China. Corresponding author: Chanying Li (cyli@amss.ac.cn).}\and
 Chanying Li\footnotemark[2]}
\begin{document}
\maketitle

\begin{abstract}
  This paper derives a somewhat surprising but interesting enough result on  the stabilizability of discrete-time parameterized uncertain systems.
Contrary to an intuition, it shows that the growth rate of a discrete-time stabilizable system  with linear parameterization   is not necessarily to be small all the time. More specifically, to achieve the stabilizability,    the system function $f(x)=O(|x|^b)$ with $b<4$ is only required for a very  tiny fraction of   $x$ in $\mathbb{R}$,     even  if it grows exponentially fast for the other $x$. The proportion of the mentioned  set in $\mathbb{R}$, where the system fulfills the growth rate $ O(|x|^b)$ has also been computed,  for both the stabilizable   and unstabilizable cases. This proportion, as  indicated  herein, could be arbitrarily small, while the corresponding system is stabilizable.
\end{abstract}

\begin{keywords}
  Stochastic adaptive control, feedback limitations, stabilizability, nonlinear systems, discrete-time, least squares
\end{keywords}

\begin{AMS}
  93D15, 93D20, 93E24
\end{AMS}

\pagestyle{myheadings}
\thispagestyle{plain}
\markboth{STABILIZABILITY OF  PARAMETERIZED UNCERTAIN SYSTEMS}{ZHAOBO LIU AND CHANYING LI}

\section{Introduction.}
Linear systems (\cite{Astr95}, \cite{cg91}, \cite{goodwin},
\cite{IS96}) and nonlinear systems with nonlinearities having linear growth rates (\cite{TK}, \cite{xieguo00a}) are studied extensively in adaptive control theory. It is natural that we  keep our mind here concentrating on systems with output nonlinearities growing faster than linearities. Such investigations in the literature are mostly focused on control systems in continuous time (\cite{Marconi}, \cite{Krstic}, \cite{Khalil}). Now comes the noteworthy part. The similarities of adaptive control between continuous- and discrete-time systems  no longer exist.  A large class of  continuous-time nonlinear systems can be globally stabilized by applying
nonlinear damping or back-stepping techniques,  regardless of how fast its growth rate is (\cite{ka} and  \cite{KKK95}). But its discrete-time counterpart is obviously lack of such good property. It was found  early in  \cite{guo97} that fundamental difficulties
arise for adaptive control of discrete-time nonlinear parameterized systems.  \cite{guo97} proved that any feedback control law may fail to stabilize a discrete-time parameterized  system, if  its nonlinearity is  too high.    Such problem also troubles the control of
discrete-time nonparametric nonlinear systems (\cite{xieguo00b},
\cite{Ma08n}, \cite{zhangguo}), semiparametric uncertain systems (\cite{guo2012}, \cite{Sokolov}), linear stochastic systems with
unknown time-varying parameter processes \cite{xueguohuang}, and continuous-time nonlinear systems with sampled-date observations for
 prescribed sampling rates \cite{xueguo}.

All the phenomena suggest that a feedback  has its limit in stabilizing a discrete-time uncertain system. The feedback limit was first characterized by an exponent $b=4$ in \cite{guo97}, where a  discrete-time nonlinear  stochastic system with a scalar parameter was studied:
\begin{equation}\label{sys1}
y_{t+1}=\theta y^b_t+u_t+w_{t+1}.
 \end{equation}
It showed the system is stabilizable if and only if $b<4$. Later on, \cite{xieguo99} confirms the idea of \cite{guo97} on feedback limitations  by providing an ``impossibility theorem'' for the following   multi-parameter uncertain  system:
  \begin{equation}\label{system}
y_{t+1}=\theta_1 y_t^{b_1}+\theta_2 y_t^{b_2}+\cdots+\theta_n
y_t^{b_n}
 +u_t+w_{t+1}.
\end{equation}
A polynomial rule was proposed therein to describe the nonlinear growth rates that  fail all  feedback control laws in stabilizing  system \ref{system}. This  rule   was recently proved to be the necessary and sufficient condition of the
stabilizability of system \dref{system} (see \cite{lilam13}). Note that the systems mentioned above are of linear parametrization. As to the nonlinear parametrization case, some initial research indicates as well that $b=4$ is indeed an important exponent  for the stabilizability of  the underlying uncertain scalar-parameter  systems \cite{lichen16}. Meanwhile, a  parallel theory for the  stabilizability  of discrete-time systems   in the deterministic framework  has  also  been developed  accordingly. The interested readers are referred to \cite{liguo08}, \cite{liguo10}, \cite{liguo11},
\cite{liguo13}, \cite{lichen14}, \cite{lixie06}, \cite{lixieguo06}.

If we consider a model
\begin{equation}\label{sys2}
 y_{t+1}=\theta^T f(y_t)+u_t+w_{t+1},
 \end{equation}
   it is tempting to believe that function $f(x)$ for a stabilizable system should obey the polynomial rule characterized in \cite{xieguo99} (this rule degenerates to $b=4$ when the parameter is of one-dimension), at least
for most $x\in \mathbb{R}$. It may be a little
frustrated that     the polynomial rule forces the largest  value of the exponents $b_1,b_2,\ldots,b_n$  around  $1$, whenever  the number of the unknown parameters are very large. That means, with sufficiently many  parameters, the expected nonlinear growth rate of a stablizable uncertain system in form \ref{sys2} is  close to linear. So, people might guess that discrete-time feedback control has  very limited    capability  in dealing with nonlinear systems.

But, the truth is unexpected. For  the scalar-parameter case, if we denote the set of $x$ that  $f(x)=O(|x|^b)$ with $b<4$  by $S^L_b$,
the results of this paper find that a stabilizable system could   admit of $S^L_b$ being  a very  tiny fraction of   $\mathbb{R}$. How tiny?  As long as the ``proportion'' of  $S^L_b$  in $\mathbb{R}$ does not equal to zero! Roughly speaking, for any $\epsilon>0$,  a scalar-parameter system with
$$
\dfrac{\ell\{x: f(x)\,\, \mbox{grows slower than}\,|x|^b,b<4\}}{\ell\{x\in \mathbb{R}: f(x)\,\, \mbox{grows exponentially} \}}=\epsilon
$$
may still be stabilizable, where  $\ell$ denotes the Lebesgue measure. The least-squares (LS) based self-tuning regulator, as shown later, is competent to  perform the stabilizing task.
This  tells us that a  nonlinear discrete-time parameterized  system,  which grows very fast for most of the time,   still stands a chance to be stabilized by some feedback controller. It is not a surprise that continuous-time controllers  could fulfill such  works, as they regulate systems at every moment.
 However, this is not  obvious in discrete-time control.  There is a certain amount of information loss during controller designs by using sampled date,  especially for a long time running.
 In addition, our results also derive a proportion of $S^L_4$ for  unstabilizable systems.

The rest of the paper is organized as follows. Section \ref{MR} presents our main results on the   stabilizabitlity of a basic class of  nonlinear discrete-time  parameterized systems.
The  proof of the stabilizablity theorem   is contained in  Section  \ref{gsta}, while Section \ref{punsta} treats the unstabilizability part.
The conclusion remarks are finally  given in Section \ref{conre}.

\section{Main results.}\label{MR}
We consider the following parameterized uncertain system
\begin{equation}\label{sys}
 y_{t+1}=\theta f(y_t)+u_t+w_{t+1},~~~~~t\geq0,
 \end{equation}
  where $\theta \in \mathbb{R}$ is an unknown parameter,   $y_t, u_t, w_t \in \mathbb{R}$ are the  system  output, input and noise signals, respectively. Assume
$f: \mathbb{R}\to \mathbb{R}$ is a  known piecewise continuous function and the initial value $y_0$ is independent of $\theta$ and $\{w_t\}$. Moreover,
\begin{description}
\item[A1]
The noise $\{w_t\} $ is an  i.i.d random sequence with  $w_1 \sim N(0,1)$.

\item[A2] Parameter $ \theta\sim N(\theta_0,P_0)$ is independent of $\{w_t\}$.
\end{description}

We begin by studying the stabilizability of system \ref{sys}, which is defined as follows.
\begin{definition}
System \ref{sys} is said to be globally  stabilizable,  if there exits a feedback control law
\begin{equation}\label{utlaw}
u_t\in \mathcal{F}_{t}^y\triangleq\sigma\lbrace y_i,0\leqslant i\leqslant t\rbrace,~t\geq 0
 \end{equation}such that for any initial $y_0\in \mathbb{R},$
$$\sup_{t\geq 1}\frac{1}{t}\sum_{i=1}^{t}y_i^2<+\infty,\quad \mbox{a.s.}.$$
\end{definition}

\subsection{How the proportion of $S_b^L$ in $\mathbb{R}$ matters?}

As already noted   by \cite{guo97},
\begin{equation}\label{grf}
|f(x)|=O(|x|^b)+O(1) \quad\mbox{with}\quad b<4, \,\,x\in \mathbb{R}
 \end{equation}
 is a very important growth rate to guarantee the stabilizability of system \ref{sys}. We might claim that at least  the growth rate \ref{grf} should hold for $x$ in the vast majority of $\mathbb{R}$, or  unstabilizability would be  inevitable. Surprisingly, this is not the case. If we denote
\begin{equation}\label{SbL}
 S_b^L\triangleq\lbrace x:|f(x)|< L|x|^b\rbrace,
  \end{equation}
 the
 ``proportion'' of set $S_b^L$ with $b<4$ in the real number field $\mathbb{R}$ could be arbitrarily small, while system \ref{sys} is still stabilizable, even if $f(x)$  grows exponentially for $x$  outside $S_b^L$. This fact is verified by

\begin{theorem}\label{sta}
Under Assumptions A1--A2,  system \ref{sys} is globally stabilizable if \\
 (i) for some $k_1,k_2>0$,
\begin{equation}\label{fk1k2}
|f(x)|\leq k_1 e^{k_2|x|},\quad \forall x\in \mathbb{R};
\end{equation}
(ii) there exist two numbers  $b<4$ and $L>0$ such that
\begin{equation}\label{SbL>0}
\liminf_{l\to+\infty}\frac{\ell( S_b^L\cap[-l,l])}{l}>0,
\end{equation}
where   $\ell$ denotes the Lebesgue measure.
\end{theorem}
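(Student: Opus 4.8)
The plan is to construct an explicit least-squares self-tuning regulator and to exploit the conditional Gaussian structure that Assumptions A1--A2 force on the closed loop. Because the prior on $\theta$ and the noise are Gaussian and the regressor $f(y_t)$ is $\mathcal{F}_t^y$-measurable, the model is conditionally linear-Gaussian, so the posterior of $\theta$ given $\mathcal{F}_t^y$ is \emph{exactly} $N(\hat\theta_t,P_t)$, where $\hat\theta_t$ is the Bayesian least-squares estimate and $P_t^{-1}=P_0^{-1}+\sum_{i=0}^{t-1}f(y_i)^2=:r_t$. I would use the certainty-equivalence control $u_t=-\hat\theta_t f(y_t)$, which is admissible since $\hat\theta_t$ and $f(y_t)$ are functions of $y_0,\dots,y_t$. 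The closed loop is $y_{t+1}=(\theta-\hat\theta_t)f(y_t)+w_{t+1}$, and the decisive point is that, conditionally on $\mathcal{F}_t^y$, the factor $\theta-\hat\theta_t$ is $N(0,P_t)$ and independent of $w_{t+1}$, so
\[
y_{t+1}\mid\mathcal{F}_t^y\ \sim\ N\!\Big(0,\ \tfrac{r_{t+1}}{r_t}\Big),\qquad
E\big[y_{t+1}^2\mid\mathcal{F}_t^y\big]=\frac{r_{t+1}}{r_t}=1+\frac{f(y_t)^2}{r_t}.
\]
This exact identity, rather than the usual asymptotic least-squares bounds, is what I would build the entire argument on.

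I would then reduce stabilizability to a single pathwise estimate. Writing $y_{t+1}^2=\big(y_{t+1}^2-E[y_{t+1}^2\mid\mathcal{F}_t^y]\big)+r_{t+1}/r_t$ and summing, the Cesàro average $\frac1T\sum_{t\le T}y_t^2$ splits into a martingale part (whose bracket is $\sum 2(r_{t+1}/r_t)^2$) and the sum $\frac1T\sum_{t<T}(1+f(y_t)^2/r_t)$. Everything comes down to
\begin{equation*}
\sum_{t=0}^{T-1}\frac{f(y_t)^2}{r_t}=O(T)\qquad\text{a.s.},\tag{$\star$}
\end{equation*}
the martingale remainder being dominated by the same large values of $y_t$ and controlled by the same excursion estimates (applied to the fourth moment). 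Splitting $(\star)$ according to whether $f(y_t)^2\le r_t$, the ``small'' terms contribute at most $T$, so the whole difficulty concentrates at the ``big-jump'' times, where $f(y_t)$ is so large that $r$ at least doubles.

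The heart of the matter is to show these big jumps are rare and cheap, and this is exactly where (i) and (ii) enter. A big jump needs $|y_t|$ large with $y_t$ in the exponential region; by (i) this inflates $r_{t+1}$ only to size $e^{O(|y_t|)}$, so $\log r_t$ grows sub-exponentially. After such a jump $P_{t+1}$ is tiny, so the conditional variance $r_{t+2}/r_{t+1}$, and hence $y_{t+1}$, is again moderate \emph{unless} the trajectory lands once more in the exponential region. Here condition (ii) is decisive: since $S_b^L$ has positive lower Lebesgue density, a centred Gaussian of large variance lands in $S_b^L$ with probability bounded below by some $\delta>0$, uniformly in the variance; consequently a ``cascade'' of consecutive exponential steps is killed geometrically, so each excursion returns to $S_b^L$ after finitely many steps almost surely. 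Once inside $S_b^L$, the bound $|f(y)|<L|y|^b$ applies and the linearised recursion for $\log|y_t|$ has characteristic roots $\tfrac12\big(b\pm\sqrt{b(b-4)}\big)$; for $b<4$ these are complex of modulus $\sqrt b$, so $\log|y_t|$ oscillates and is driven below zero within a bounded number of steps, restoring $|y_t|=O(1)$. Thus $b<4$ supplies the self-correction and (ii) supplies the capture into the self-correcting set.

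The main obstacle, and where the real work lies, is converting this mechanism into the almost-sure bound $(\star)$. One must show pathwise that the total cost of all excursions up to time $T$ is $O(T)$: an excursion reaching level $M$ costs at most $e^{O(M)}$ but, in the regime where $r_t$ is already large, is triggered only when $|y_t|\gtrsim\log r_t$, an event of Gaussian probability $e^{-\Omega((\log r_t)^2)}$. A Borel--Cantelli and stopping-time accounting then has to balance these exponentially large costs against their super-polynomially small probabilities, while also handling the branching of cascades (an excursion may re-enter the exponential region before $S_b^L$ captures it). The delicate feature is that this balance holds only almost surely and not in expectation -- indeed the expected cost of a cascade is infinite, since an iterated-exponential size is weighted by a merely geometric probability -- so the argument must be run on trajectories and cannot be reduced to a moment computation. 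Granting this excursion bound, $(\star)$ together with the control of the martingale remainder yields $\limsup_{T}\frac1T\sum_{t\le T}y_t^2<\infty$ a.s., which is the asserted global stabilizability.
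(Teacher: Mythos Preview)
Your setup matches the paper exactly: the same certainty-equivalence control $u_t=-\hat\theta_t f(y_t)$, the same conditional Gaussian identity $y_{t+1}\mid\mathcal{F}_t^y\sim N(0,r_{t+1}/r_t)$, and you correctly isolate the two ingredients---positive lower density of $S_b^L$ for capture, and $b<4$ for self-correction. Your characteristic equation $\lambda^2-b\lambda+b=0$ is, after the substitution $\lambda=1+x$, precisely the paper's fixed-point equation $x^2-(b-2)x+1=0$, so the mechanism you have in mind is the right one.

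The genuine gap is in the self-correction step. Your argument ``once inside $S_b^L$, $\log|y_t|$ oscillates and is driven below zero within a bounded number of steps'' tacitly assumes the trajectory stays in $S_b^L$ for $k$ \emph{consecutive} steps (with $k$ determined by the argument of your complex roots), yet your capture argument only guarantees a \emph{single} visit to $S_b^L$ per excursion. The paper closes exactly this hole (Lemma~\ref{two}): since a centered Gaussian of any variance hits $S_b^L$ with probability at least some $\delta>0$, iterated conditioning plus Borel--Cantelli--L\'evy gives that blocks of $k+1$ consecutive visits occur infinitely often. Without this block property, your linearised two-step recursion has no purchase.

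More substantially, the paper does not attempt your excursion accounting for $(\star)$ at all. It proves the stronger statement $\sup_t\sigma_t^2<\infty$ a.s.\ (Lemma~\ref{r/r<}) by contradiction: if $\sigma_t\to\infty$ on a set of positive probability (Lemma~\ref{four} upgrades $\sup=\infty$ to $\lim=\infty$, using condition (i)), then along each block of $k+1$ consecutive visits to $S_b^L$ one runs a \emph{backward} bootstrap on the exponent: $\sigma_{m+1}^2\ge r_m^{a_i}$ forces $\sigma_m^2\ge r_{m-1}^{a_{i+1}}$ with $a_{i+1}=\tfrac{1+a_i}{b-1-a_i}-\varepsilon_i$ (Lemma~\ref{three}). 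For $b<4$ this M\"obius-type map has no real fixed point and overshoots $b-1$ in finitely many steps (Lemma~\ref{six}), yielding $\sigma_m^2>r_{m-1}^{a_k}$ with $a_k>b-1$ infinitely often---an event a direct Gaussian tail estimate shows to be summable, contradiction. This route bypasses entirely the delicate pathwise cost-versus-probability balancing you flag as the main obstacle; once $\sup_t\sigma_t<\infty$, the classical LS bound $\sum_i\alpha_i=O(\log r_t)$ together with $\log r_t=O(\log t+\max_i|y_i|)$ from (i) finishes the proof in a few lines.
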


\begin{remark}
Let $p_b\triangleq \liminf_{t\to+\infty} p^l_b$ with $ p^l_b\triangleq   \frac{\ell( S_b^L\cap[-l,l])}{2l}$, then $p_b$ describes the ``proportion'' of  $S_b^L$ in $\mathbb{R}$. Since $p_b>0$ can be taken as small as one likes, Theorem \ref{sta} 
 produces an interesting finding that the growth rate \ref{grf} is only necessary for a   set of $x$ extremely  sparse in $\mathbb{R}$.
\end{remark}

If $f(x)$ grows  no faster than a power function, then   $p^l_b$,     the  proportion    of  $S_b^L$ in interval $[-l,l]$, could converge to zero with a rate $\frac{1}{\log\log l}$ as $l\rightarrow +\infty$ for some properly small $b>0$.

\begin{theorem}\label{sta2}
Under Assumptions A1--A2,  system \ref{sys} is globally stabilizable if \\
(i) for some $a\geq 4$,
\begin{equation}\label{c}
|f(x)|=O(|x|^a)+O(1),\quad \mbox{as}\,\,|x|\rightarrow+\infty;
\end{equation}
 (ii) there exist two numbers  $b<(1+x_{\min})^2$ and $L>0$ such that
\begin{equation}\label{ScL>0}
\liminf_{l\to+\infty}\dfrac{\ell( S_b^L\cap[-l,l])}{l}\cdot\log(\log l)>0,
\end{equation}
where  $x_{\min}$ denotes the smallest solution of equation $x^2-(a-2)x+1=0$.
\end{theorem}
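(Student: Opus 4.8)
The plan is to exhibit an explicit stabilizing controller, namely the least-squares (LS) based self-tuning regulator $u_t=-\hat\theta_t f(y_t)$, where $\hat\theta_t$ is the Bayesian/regularized LS estimate of $\theta$ built from the observations up to time $t$ with the prior $N(\theta_0,P_0)$ of A2. Writing $z_{t+1}:=y_{t+1}-u_t=\theta f(y_t)+w_{t+1}$ turns identification into a scalar linear regression with regressor $f(y_t)$, so $\hat\theta_t$ and the information $a_t:=P_0^{-1}+\sum_{i=0}^{t-1}f(y_i)^2$ obey the usual LS recursions. Under this controller the closed loop collapses to $y_{t+1}=\tilde\theta_t f(y_t)+w_{t+1}$ with $\tilde\theta_t:=\theta-\hat\theta_t$, and since $y_{t+1}^2\le 2\tilde\theta_t^2 f(y_t)^2+2w_{t+1}^2$ with $\frac1t\sum_{i=1}^t w_i^2\to1$ a.s., the goal $\sup_t\frac1t\sum_{i=1}^t y_i^2<\infty$ a.s. reduces to controlling $\frac1t\sum_{i<t}\tilde\theta_i^2 f(y_i)^2$.

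The first quantitative step is a sharp error bound. A direct computation gives the telescoping identity $a_t\tilde\theta_t=P_0^{-1}(\theta-\theta_0)-\sum_{i=0}^{t-1}f(y_i)w_{i+1}$, so $M_t:=a_t\tilde\theta_t$ is, up to an $\mathcal{F}_0$-measurable constant, a martingale with predictable variation $\langle M\rangle_t=\sum_{i=0}^{t-1}f(y_i)^2=a_t-P_0^{-1}$. Invoking the law of the iterated logarithm for martingales (Stout's theorem) yields $M_t^2=O(a_t\log\log a_t)$ a.s. on $\{a_t\to+\infty\}$, whence $\tilde\theta_t^2=O(\log\log a_t/a_t)$; persistent excitation $a_t\to+\infty$ follows because the Gaussian noise keeps $y_t$ wandering so that $f(y_t)$ cannot vanish eventually. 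It is exactly this $\log\log a_t$ factor — rather than the cruder $\log a_t$ bound that suffices when only $b<4$ and a positive-density $S_b^L$ are assumed in Theorem \ref{sta} — that must be defeated by the density hypothesis \ref{ScL>0}, which is why the factor $\log(\log l)$ appears in condition (ii).

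The core is then an excursion/recurrence analysis. Substituting $f(y_t)^2=a_{t+1}-a_t$ and the error bound gives $y_{t+1}^2=O\big(\tfrac{a_{t+1}-a_t}{a_t}\log\log a_t\big)$, so a single step is harmless unless the relative information gain $(a_{t+1}-a_t)/a_t$ is large, i.e. unless $|f(y_t)|\gg|f(y_{t-1})|$. Using the polynomial bound $|f(x)|=O(|x|^a)+O(1)$ I would show that a sustained runaway — consecutive steps on which $y_t$ grows while staying outside $S_b^L$ — is governed, after taking logarithms, by a two-step recursion of the form $\rho_{t+1}\approx a\rho_t-a\rho_{t-1}$ for $\rho_t=\log|y_t|$, whose solutions grow geometrically and hence make $|y_t|$ blow up doubly-exponentially; consequently the trajectory can stay bounded in Cesàro mean only if such runaways are repeatedly interrupted by returns to $S_b^L$. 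Quantifying how a return to $S_b^L$ (where the gain is cut from $|y|^a$ down to $|y|^b$) contracts an excursion leads to the worst-case balance encoded by $x^2-(a-2)x+1=0$: with $x_{\min}x_{\max}=1$ and $x_{\min}+x_{\max}=a-2$ one finds $(1+x_{\min})(1+x_{\max})=a$, hence $(1+x_{\min})^2=a\,x_{\min}$, so the contraction closes precisely when $b<(1+x_{\min})^2$, which is hypothesis (ii). The density condition \ref{ScL>0}, calibrated against the $\log\log$ penalty from the LIL, then guarantees that the noise-driven trajectory revisits $S_b^L$ often enough for these interruptions to keep $\frac1t\sum_{i=1}^t y_i^2$ bounded.

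I expect the main obstacle to be this last recurrence estimate: one must control the length and the height of excursions outside $S_b^L$ simultaneously and show that, with the return frequency guaranteed by \ref{ScL>0}, their total contribution to $\sum_{i\le t} y_i^2$ grows only linearly in $t$ — all while the error bound itself depends, through $a_t$, on the trajectory already being controlled, so the argument is genuinely self-referential and has to be closed by a bootstrap on the growth of $a_t$. Matching the $\log(\log l)$ rate in \ref{ScL>0} to the iterated-logarithm penalty, and verifying that the worst case is indeed the one captured by $x^2-(a-2)x+1=0$ rather than some other in/out alternation pattern, are the delicate points.
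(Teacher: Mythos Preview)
Your plan diverges from the paper's proof in a way that matters, and one of your key heuristics is off.

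The paper never uses the martingale LIL. Because $\theta\sim N(\theta_0,P_0)$ and $w_t$ is Gaussian, the LS estimate is the Kalman filter, and $y_{t+1}\mid\mathcal{F}_t^y$ is \emph{exactly} $N(0,\sigma_t^2)$ with $\sigma_t^2=r_t/r_{t-1}$, $r_t=P_0^{-1}+\sum_{i\le t}f(y_i)^2$. The whole analysis is run on these conditional variances via Borel--Cantelli, not on $\tilde\theta_t$. Concretely, the paper assumes $P(\{\sup_t\sigma_t=\infty\})>0$ and shows (Lemma \ref{seven}) that on this bad set the log-information satisfies
\[
1+x_{\min}\le \liminf_{t\to\infty}\frac{\log r_t}{\log r_{t-1}}
\quad\text{and}\quad
\limsup_{t\to\infty}\frac{\log r_t}{\log r_{t-1}}\le 1+x_{\max},
\]
the sandwich being built by iterating a one-step conditional-Gaussian tail bound (Lemma \ref{three}) along the ladder $a_{i+1}=\frac{1+a_i}{a-1-a_i}-\varepsilon_i$, whose fixed points are exactly $x_{\min},x_{\max}$. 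With this sandwich and $b<(1+x_{\min})^2$, another Gaussian-tail estimate forces $y_{m+1}\notin S_b^L$ eventually on the bad set, hence $\sum_m P(y_{m+1}\in S_b^L\mid\mathcal{F}_m^y)<\infty$ there. On the other hand, the upper bound on $\log r_t/\log r_{t-1}$ gives $\log\sigma_m\le M_1^m$, so $\log\log\sigma_m=O(m)$, and then the density hypothesis \dref{ScL>0} yields
\[
P(y_{m+1}\in S_b^L\mid\mathcal{F}_m^y)\;\ge\; c\,\frac{\ell(S_b^L\cap[-\sigma_m,\sigma_m])}{\sigma_m}\;\ge\;\frac{c'}{\log\log\sigma_m}\;\ge\;\frac{c''}{m},
\]
so the same sum diverges. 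Contradiction; hence $\sup_t\sigma_t<\infty$ a.s., after which the closing argument is identical to that of Theorem \ref{sta}.

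This means your explanation of the $\log\log l$ in \dref{ScL>0} is wrong: it is \emph{not} there to beat an LIL penalty $\log\log a_t$ on $\tilde\theta_t$. It is there because, on the putative blow-up set, $\sigma_m$ grows doubly exponentially (equivalently $\log r_m$ grows geometrically), so a density of order $1/\log\log l$ is exactly what makes $\sum_m P(y_{m+1}\in S_b^L\mid\mathcal{F}_m^y)$ a divergent harmonic series. Your excursion picture and the recursion $\rho_{t+1}\approx a\rho_t-a\rho_{t-1}$ are consistent with the paper's quadratic (via $\lambda=1+x$), but the ``bootstrap on $a_t$'' you flag as the main obstacle is precisely what the paper sidesteps by working with exact conditional Gaussians and a contradiction argument rather than pathwise excursion bookkeeping. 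As written, your proposal does not supply the mechanism that actually closes the proof, and the role you assign to the LIL would lead you to look for the wrong cancellation.
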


\subsection{What is the proportion of $S^L_b$ for unstabilizable systems?}

Now, we turn to discuss the unstabilizability of system \ref{sys}.

\begin{definition}
System \ref{sys} is  unstabilizable,  if   for any feedback control law $\{u_t\}$ defined by \ref{utlaw}, there exists an initial $y_0$ such that for some    set $D$ with $P(D)>0$,
 $$ \limsup_{t\rightarrow +\infty} \frac{1}{t}\sum_{i=1}^{t}y_i^2=+\infty\quad \mbox{on}\,\, D.$$
\end{definition}

 It is conceivable that   the unstabilizability of system \ref{sys}  depends on  the sparsity of set $S_4^L= \left\lbrace x:|f(x)|< L|x|^4\right\rbrace$ in   $\mathbb{R}$. Indeed, when the
 proportion of set $S_4^L$  in any given interval with length $l$   tends to zero  rapidly  as  $l\rightarrow+\infty$, system \ref{sys} becomes unstabilizable. The required  convergence rate is specified below.

\begin{theorem}\label{unsta}
Under Assumptions A1--A2, system \ref{sys} is unstabilizable  if there exist two numbers $\delta, L>0$ such that as $l\to +\infty$,
$$\sup\limits_{x\in \mathbb{R}}\dfrac{\ell(S_4^L\cap[x-l,x+l])}{l}=O\left(\frac{1}{(\log (\log l))^{1+\delta}}\right).$$
\end{theorem}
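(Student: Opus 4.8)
The plan is to turn the exact Gaussian structure of the model into a control-independent lower bound on the one-step conditional variance of the output, and then to show that the hypothesized sparsity of $S_4^L$ forces the trajectory, with positive probability, onto a doubly-exponentially escaping path, regardless of the feedback law. The first step is a Bayesian reduction. Under A1--A2, since each $u_t$ is $\mathcal{F}_t^y$-measurable, conditioning on $\mathcal{F}_t^y$ renders the regressors $f(y_0),\dots,f(y_t)$ known, so the posterior of $\theta$ stays Gaussian, $\theta\mid\mathcal{F}_t^y\sim N(\hat\theta_t,P_t)$ with $P_t^{-1}=r_t:=P_0^{-1}+\sum_{i=0}^{t-1}f(y_i)^2$. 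As $w_{t+1}$ is independent of $(\theta,\mathcal{F}_t^y)$, one gets $y_{t+1}\mid\mathcal{F}_t^y\sim N(\hat\theta_t f(y_t)+u_t,\,V_t)$ with $V_t:=f(y_t)^2P_t+1$. The decisive feature is that $V_t$ is independent of $u_t$: no controller can push the conditional standard deviation below $\sqrt{V_t}$. Writing $G:=\mathbb{R}\setminus S_4^L=\{x:|f(x)|\ge L|x|^4\}$, on the event $\{y_t\in G\}$ this gives the clean bound $V_t\ge L^2|y_t|^8/r_t$.

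Next I would set up the escape recursion and expose the role of $b=4$. Choose the initial value $y_0$ to be a large point $\Lambda_0$ lying in $G$ (possible because $S_4^L$ has vanishing density, so $G$ has full density), and track the scale $\Lambda_t:=|y_t|$ together with $r_t$. If $y_t\in G$ and $r_t$ has not yet caught up with $|y_t|^8$, then $V_t$ is large and $y_{t+1}=\mu_t+\sqrt{V_t}\,Z_t$ (with $\mu_t$ the controller's conditional mean and $Z_t\sim N(0,1)$ fresh) is large irrespective of $\mu_t$. Since $r_t$ is dominated in the logarithmic scale by its last term $f(y_{t-1})^2$, setting $a_t:=\log\Lambda_t$ and using $|f(x)|\ge L|x|^4$ on $G$ yields, up to bounded random terms $\log|Z_t|$, the linear recursion $a_{t+1}\approx 4a_t-4a_{t-1}$. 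Its characteristic polynomial $x^2-4x+4$ has the double root $2$, so $a_t\sim(A+Bt)2^t$ and the scale grows doubly exponentially. This is precisely where the exponent $4$ enters; a faster-growing $f$ on $G$ only accelerates the escape, so $|f|\ge L|x|^4$ is the worst (slowest) case.

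The heart of the argument is then the probabilistic bookkeeping. For the escape to sustain itself I need, at each step, two events: that $y_{t+1}$ again lands in $G$, and that it does not crash back to a small scale. For the first, because $y_{t+1}\mid\mathcal{F}_t^y$ is Gaussian with standard deviation $\sqrt{V_t}$, the \emph{uniform} density hypothesis gives $P(y_{t+1}\in S_4^L\mid\mathcal{F}_t^y)\lesssim\sup_x\ell(S_4^L\cap[x-l,x+l])/l$ evaluated at $l\sim\sqrt{V_t}$, hence $\lesssim(\log\log\Lambda_{t+1})^{-(1+\delta)}$; the supremum over $x$ is exactly what lets me bound this regardless of the controller's center $\mu_t$. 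Along the doubly-exponential scales, $\log\log\Lambda_t\sim t\log 2+\log\log\Lambda_0$, so these probabilities behave like $(ct)^{-(1+\delta)}$ and are summable precisely because $\delta>0$, with sum as small as desired by taking $\Lambda_0$ large. For the second, a crash to a small scale requires $|Z_t|$ to be doubly-exponentially small, an event of summable probability. Hence the total failure probability over all $t$ is strictly less than $1$ for $\Lambda_0$ large, so on a set $D$ of positive probability the escape proceeds forever and $\limsup_{T}\frac1T\sum_{i=1}^T y_i^2\ge\limsup_T \Lambda_T^2/T=+\infty$ on $D$, which is the definition of unstabilizability.

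The delicate part is this last step: decoupling the controller's freedom in $\mu_t$ from the escape, and simultaneously controlling the two failure mechanisms so that their probabilities sum to less than one. The matching is quantitatively tight---the doubly-exponential growth makes $\log\log\Lambda_t$ linear in $t$, so summability of the sparsity-driven failure probabilities is equivalent to $\delta>0$, which is exactly the threshold that separates this theorem from the stabilizable regime of Theorem \ref{sta2}. Turning the heuristic recursion into rigorous estimates---handling the $\log|Z_t|$ fluctuations, the lower-order corrections to $\log r_t$, and the possibility that $f$ is wildly non-monotone on $G$ (where a momentarily huge $f(y_{t-1})$ inflates $r_t$ but also inflates the preceding jump $\Lambda_t$, so escape persists)---is where the real work lies.
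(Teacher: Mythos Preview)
Your proposal is correct in all essentials and follows the same route as the paper; the one substantive difference is the choice of tracked variable. The paper parameterizes the escape via $r_t=P_0^{-1}+\sum_{i\le t}f(y_i)^2$ rather than $|y_t|$: it sets $H_t=\{r_t>r_{t-1}^{2+1/t},\ y_t\notin S_4^L\}$ and shows $P\big(\bigcap_tH_t\big)>0$ by bounding $P(H_{t+1}^c\mid\mathcal F_t^y)$ by three summable pieces---a ``crash'' term $J_t$, a Gaussian tail $K_t$, and the sparsity term $L_t$---exactly the decomposition you outline. On $\bigcap_tH_t$ one reads off $\sigma_t^2=r_t/r_{t-1}>e^{2^t}$, whence $|y_t|\to\infty$ by a second Borel--Cantelli argument.

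The payoff of tracking $r_t$ is precisely the point you flag as delicate at the end. Your second-order recursion $a_{t+1}\approx 4a_t-4a_{t-1}$ on $a_t=\log|y_t|$ implicitly uses $r_t\approx f(y_{t-1})^2\approx L^2|y_{t-1}|^8$, i.e.\ an \emph{upper} bound on $|f|$ over $G$, which you do not have. The paper's first-order recursion $\log r_t>(2+1/t)\log r_{t-1}$ only ever invokes the \emph{lower} bound $|f(x)|\ge L|x|^4$ on $G$ (to push $r_{t+1}\ge r_t+L^2|y_{t+1}|^8$, with $|y_{t+1}|$ controlled from below by $\sigma_t=\sqrt{r_t/r_{t-1}}$), so the ``wildly non-monotone $f$'' issue simply never arises. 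The two recursions are equivalent at the critical-exponent level---the double root $2$ of your polynomial $(x-2)^2$ is exactly the factor in $r_t>r_{t-1}^2$---and your observation that summability of the sparsity-driven failure probability is equivalent to $\delta>0$ matches the paper's $L_t^{(1)}=O(\log t/t^{1+\delta})$ computation verbatim.
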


\begin{remark}
Note that in Theorem \ref{sta2},  $b<(1+x_{\min})^2=4$ when $a=4$. Then, in view of Theorem \ref{unsta}, if
$|f(x)|=O(|x|^4+1)$, we in fact derive a law of iterated logarithm  $\frac{1}{\log\log l}$   that almost describes the ``critical convergence rate'' of  $p^l_b$ to guarantee the stabilizability of system \ref{sys}.
\end{remark}

Theorem \ref{unsta} can be sharpened.   Assume $h:[0,+\infty)\to[0,+\infty)$ is a nonnegative monotone increasing piecewise continuous function  and satisfies $h(x)=O(x^4)+O(1)$. Let $g(x)\triangleq |x|^{-\frac{1}{4}}h^{-1}(|x|)$, where $h^{-1}$ denotes the inverse function of $h$. Theorem \ref{unsta} is a   direct consequence of  the following theorem  by taking $h(x)=Lx^4$ and
 $g(x)\equiv L^{-\frac{1}{4}}$.

\begin{theorem}\label{unsta1}
Under Assumptions A1--A2, system \ref{sys} is unstabilizable if there is a $\delta>0$ such that
 $$\sup\limits_{x\in \mathbb{R}}\dfrac{\ell(S_h\cap[x-l,x+l])}{l}=O\left(\frac{1}{(\log(\log l))^{1+\delta}}\right) ,$$
 where $S_h\triangleq\lbrace x:|f(x)|< h(|x|)\rbrace$ with $h$  satisfying
\begin{equation}\label{utasi}
\sum_{t=1}^{+\infty}\sup\limits_{x\in [e^{2^{t}},+\infty)}x^{-\frac{1}{16t^2}}g(x)<+\infty.
\end{equation}
\end{theorem}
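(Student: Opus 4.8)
The plan is to prove unstabilizability \emph{directly}, by exhibiting, for an arbitrary $\mathcal{F}_t^y$-adapted control $\{u_t\}$ and a suitably large initial $y_0$, an event $D$ with $P(D)>0$ on which $|y_t|$ grows so fast that $\frac1t\sum_{i=1}^t y_i^2\ge y_t^2/t\to+\infty$. The starting point is that the model is linear in $\theta$ with a Gaussian prior and Gaussian noise, so the posterior $\theta\mid\mathcal{F}_t^y$ is Gaussian $N(\hat\theta_t,P_t)$ with $P_t^{-1}=P_0^{-1}+\sum_{i<t}f(y_i)^2$. Since $w_{t+1}$ is independent of $(\theta,\mathcal{F}_t^y)$, the conditional law of $y_{t+1}$ is $N(m_t,V_t)$ with mean $m_t=\hat\theta_t f(y_t)+u_t$ \emph{chosen by the controller} and variance $V_t=P_tf(y_t)^2+1$ that the controller \emph{cannot influence}. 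This is the lever of the whole argument: the control moves only the mean, never the spread $\sigma_t:=\sqrt{V_t}$, and $E[y_{t+1}^2\mid\mathcal{F}_t^y]\ge V_t$ no matter how $u_t$ is picked.

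The driving mechanism is a self-reinforcing growth. Write $F_t=|f(y_t)|$ and $r_t=P_t^{-1}$. Whenever $y_t\notin S_h$ one has $F_t\ge h(|y_t|)$, and if the $F_i$ grow fast then $r_t\approx F_{t-1}^2$, so $\sigma_t\approx F_t/F_{t-1}$ is large. Because $y_{t+1}=m_t+\sigma_t Z$ with $Z\sim N(0,1)$ conditionally, the output is spread over a band of width $\sim\sigma_t$ regardless of $m_t$; I will show that with overwhelming conditional probability $y_{t+1}$ falls \emph{outside} the sparse set $S_h$ and at scale $\sigma_t$, whence $F_{t+1}\ge h(|y_{t+1}|)\gtrsim h(\sigma_t/t^2)$. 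Feeding this back, the critical growth $h(x)=O(x^4)$ gives (via $\log F_{t+1}\approx 4(\log F_t-\log F_{t-1})$, a recursion with the double root $2$) the doubly-exponential rate $\log F_t\sim t\,2^t$, hence $\log\log|y_t|\sim t$ and $|y_t|$ living on the scale $e^{2^t}$ that appears in \ref{utasi}. Closing this induction for a general admissible $h$ --- that is, showing the growth is genuinely self-sustaining and the controller cannot throttle it --- is exactly where the hypothesis \ref{utasi} is consumed: the factor $g(x)=|x|^{-\frac14}h^{-1}(|x|)$ converts an attained output scale into the next $f$-value scale, and \ref{utasi} guarantees the associated telescoping products stay bounded so that $\sigma_t$ keeps its doubly-exponential pace.

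The escape step is the quantitative use of sparsity. For the conditional Gaussian law of $y_{t+1}$, a density bound gives, for any mean $m_t$ and any $R>0$,
\[
P(y_{t+1}\in S_h\mid\mathcal{F}_t^y)\le \frac{1}{\sqrt{2\pi}}\cdot\frac{\sup_x\ell\big(S_h\cap[x-R\sigma_t,x+R\sigma_t]\big)}{\sigma_t}+e^{-R^2/2}\lesssim \frac{R}{(\log\log\sigma_t)^{1+\delta}}+e^{-R^2/2},
\]
using the hypothesis with window $l=R\sigma_t$; similarly $P(|y_{t+1}|<\sigma_t/t^2\mid\mathcal{F}_t^y)\le 2/(t^2\sqrt{2\pi})$, since a Gaussian places mass $O(c)$ on any band of relative half-width $c$, uniformly over its mean. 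On the growth track $\log\log\sigma_t\sim t$, so choosing $R=R_t=\sqrt{2(1+\delta)\log t}$ renders the per-step failure probability $P(G_t^c\mid\mathcal{F}_t^y)$, where $G_t=\{|y_{t+1}|\ge\sigma_t/t^2\}\cap\{y_{t+1}\notin S_h\}$, summable in $t$. A conditional Borel--Cantelli / product estimate then yields $P(D)>0$ for $D=\{G_t\text{ holds for all }t\ge t_0\}$, and on $D$ the output grows doubly-exponentially, so $\frac1t\sum_{i=1}^t y_i^2\to+\infty$, which is the asserted unstabilizability.

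The main obstacle is uniformity over the adversarial, data-dependent control. Two points need care. First, the escape bound must hold uniformly in the controller-chosen mean $m_t$; this holds because the Gaussian mass captured by a fixed set is largest over translations yet is still controlled by the \emph{measure} of $S_h$ in a window, which is precisely what the sparsity hypothesis bounds --- so the controller cannot ``aim'' into $S_h$. Second, and harder, one must verify that the lower bounds defining the growth track survive the conditioning and that $D$ retains positive probability while the controller actively tries to center outputs inside $S_h$ or near the origin to stall $r_t$; this forces the careful nested-event construction on the doubly-exponential schedule and the precise bookkeeping that \ref{utasi} is designed to supply. Turning the informal identities $r_t\approx F_{t-1}^2$ and $\sigma_t\approx F_t/F_{t-1}$ into rigorous estimates along the random trajectory, rather than on a deterministic recursion, is the technical heart of the proof.
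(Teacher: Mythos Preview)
Your plan is essentially the paper's proof: both rest on the same three pillars --- the conditional law of $y_{t+1}$ is Gaussian with variance $\sigma_t^2=1+P_tf(y_t)^2$ that no admissible $u_t$ can alter; the sparsity hypothesis bounds $P(y_{t+1}\in S_h\mid\mathcal F_t^y)$ uniformly in the controller-chosen mean $m_t$ by splitting into a window of width $O(\sigma_t\log t)$ and a Gaussian tail; and a nested-event product estimate on a doubly-exponential schedule yields $P(D)>0$ via Borel--Cantelli--L\'evy.

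The one substantive difference is where condition \dref{utasi} is spent. The paper's good event is $H_t=\{r_t>r_{t-1}^{2+1/t},\,y_t\notin S_h\}$, so growth is \emph{built into} the event; the ``stall'' probability is then $P(|y_{t+1}|\le h^{-1}(r_t^{1+1/(2t+2)})\mid\mathcal F_t^y)\le 2c_t/(\sigma_t\sqrt{2\pi})$ with $c_t=r_t^{(1+1/(2t+2))/4}g(r_t^{1+1/(2t+2)})$, and on $\bigcap_{i\le t}H_i$ one has $r_t\ge e^{2^{t+1}}$, $\sigma_t^2\ge r_t^{(t+1)/(2t+1)}$, which reduces this bound to exactly $\frac{2}{\sqrt{2\pi}}\sup_{x\ge e^{2^{t+1}}}x^{-1/(16(t+1)^2)}g(x)$ --- summable by \dref{utasi}. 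Your formulation instead fixes the threshold at $|y_{t+1}|\ge\sigma_t/t^2$, making the stall probability $\le 2/(t^2\sqrt{2\pi})$ summable for free, but then you owe a \emph{deterministic} verification that $F_{t+1}\ge h(\sigma_t/t^2)$ propagates the schedule $\log\log\sigma_t\sim t$; that is where \dref{utasi} must now be invoked. Both routes close, and the ingredients are the same; the paper's parametrization via $r_t$ is slightly tidier because it makes the induction hypothesis self-contained (growth is automatic on $\bigcap H_i$) and concentrates all of \dref{utasi}'s work into a single probability bound, whereas in your version you must simultaneously track lower bounds on $\sigma_t$ and upper bounds on $r_t$ along the random trajectory to relate $h(\sigma_t/t^2)$ back to $\sigma_{t+1}$.
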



\section{Proof of Theorem \ref{sta}.}\label{gsta}

In order to prove the stabilizability of system \ref{sys}, we construct a  feedback control law based on the least-squares  (LS) algorithm. The standard LS estimate $\theta_t$ for parameter $\theta$ can be recursively defined by
\begin{eqnarray}\label{LS}
\left\{
\begin{array}{l}
\theta_{t+1}=\theta_t+a_tP_t\phi_t(y_{t+1}-u_t-\phi_t^T\theta_t)\\
P_{t+1}=P_t-a_tP_t\phi_t\phi_t^TP_t,~~P_0>0\\
\phi_t\triangleq{f(y_t)},~~t\geq0
\end{array},
\right.
\end{eqnarray}
where   $a_t\triangleq(1+\phi_t^TP_t\phi_t)^{-1}~$and$~(\theta_0,P_0)~$are the deterministic initial values of the algorithm. According to the ``certainty equivalence principle'', it is a  natural way to design the  stabilizing control by
\begin{equation} \label{ut}
u_t=-\theta_tf(y_t),\quad t\geq0.
\end{equation}

Now, for the closed-loop system \ref{sys}, \dref{LS} and \ref{ut}, one has
\begin{eqnarray}\label{yt}
&&\tilde{\theta_t}=\frac{1}{r_{t-1}}\left\lbrace \tilde{\theta_0}-\sum_{i=0}^{t-1}\phi_i w_{i+1}\right\rbrace,
\nonumber\\
&&y_{t+1}=\tilde{\theta_t}f(y_t)+w_{t+1},
\end{eqnarray}
where $\tilde{\theta_t}\triangleq\theta-\theta_t$, $r_{-1}\triangleq P_0^{-1}$, $r_t\triangleq P_{t+1}^{-1}=P_0^{-1}+\sum_{i=0}^{t}\phi_i^2$, $t\geq0$. Since the LS algorithm \dref{LS} is exactly  the standard Kalman filter for $ \theta\sim N(\theta_0,P_0)$, it yields that $\theta_t=E[\theta|\mathcal{F}_t^y]$ and $P_t=E[(\tilde{\theta_t})^2|\mathcal{F}_t^y]$. Hence,    $y_{t+1}$ possesses a  conditional    Gaussian distribution given $\mathcal{F}_t^y$. For any $t\geq0$, the conditional mean and variance are
\begin{eqnarray}\label{mean}
&&m_t\triangleq{E[y_{t+1}|\mathcal{F}_t^y]}=u_t+\theta_t\phi_t=0,\quad\mbox{a.s.}\\
&&\label{var}
\sigma_t^2\triangleq {Var(y_{t+1}|\mathcal{F}_t^y)}=1+\phi_tP_t\phi_t=\frac{\phi_t^2}{r_{t-1}}+1=\frac{r_t}{r_{t-1}},\quad\mbox{a.s.}
\end{eqnarray}

The proof of Theorem \ref{sta} is prefaced with several technique lemmas. The first presents a very simple fact, which is repeatedly used in the subsequent computations.

\begin{lemma}\label{one}
 If  $\lbrace c_t\rbrace_{t\geq 1}$ satisfies $\liminf_{t\rightarrow+\infty}\frac{c_t}{\log t}>0$, then
\begin{equation}
\sum_{t=1}^{+\infty}\int_{|x|\geqslant c_t}e^{-\frac{x^2}{2}}\,dx<+\infty\nonumber.
\end{equation}
\end{lemma}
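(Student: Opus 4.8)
The plan is to reduce the statement to the classical Gaussian tail estimate and then exploit the logarithmic lower bound on $c_t$ to force super-polynomial decay of the summands. First I would record the elementary inequality, valid for every $c>0$,
$$\int_{|x|\geqslant c} e^{-\frac{x^2}{2}}\,dx = 2\int_{c}^{+\infty} e^{-\frac{x^2}{2}}\,dx \leq 2\int_{c}^{+\infty}\frac{x}{c}\,e^{-\frac{x^2}{2}}\,dx = \frac{2}{c}\,e^{-\frac{c^2}{2}},$$
obtained by using the symmetry of the integrand, inserting the factor $x/c\geq 1$ on the half-line $[c,+\infty)$, and integrating the exact antiderivative. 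This converts each term of the series into a quantity controlled by $c_t$ alone.

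Next I would translate the hypothesis into a usable bound. Since $\liminf_{t\to+\infty} c_t/\log t>0$, there exist a constant $\alpha>0$ and an index $T$ such that $c_t\geq \alpha\log t>0$ for all $t\geq T$. For these indices the tail estimate above applies and gives
$$\int_{|x|\geqslant c_t} e^{-\frac{x^2}{2}}\,dx \leq \frac{2}{c_t}\,e^{-\frac{c_t^2}{2}} \leq \frac{2}{\alpha\log t}\; t^{-\frac{\alpha^2}{2}\log t}.$$
Because the exponent $\tfrac{\alpha^2}{2}\log t$ tends to $+\infty$, the right-hand side is eventually dominated by, say, $t^{-2}$, so the tail $\sum_{t\geq T}$ of the series converges.

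Finally I would dispose of the initial segment. For the finitely many indices $t<T$ (where $c_t$ may be negative or small, and the above bound need not hold), each integral is trivially bounded by $\int_{\mathbb{R}} e^{-x^2/2}\,dx=\sqrt{2\pi}$, so these terms contribute a finite amount. Combining the two pieces yields convergence of the full series.

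There is no genuine analytic obstacle here; the only points requiring care are that the logarithmic lower bound on $c_t$ is merely \emph{asymptotic}, which is why the series must be split at $T$ rather than bounded termwise from the start, and that one must invoke a \emph{quantitative} Gaussian tail bound (not just decay to zero) since summability, not convergence to zero, is what is at stake. The super-polynomial factor $t^{-(\alpha^2/2)\log t}$ makes the convergence immediate once this reduction is in place.
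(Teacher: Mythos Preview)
Your proof is correct and, in fact, cleaner than the paper's. Both arguments begin identically by extracting from the hypothesis a constant $\alpha>0$ and an index $T$ with $c_t\geq\alpha\log t$ for $t\geq T$, and both dispose of the finitely many initial terms trivially. Where they diverge is in bounding the Gaussian tail itself: you invoke the standard Mill's-type inequality $\int_{|x|\geq c}e^{-x^2/2}\,dx\leq\tfrac{2}{c}e^{-c^2/2}$ directly, whereas the paper avoids quoting this and instead decomposes the tail as $\int_{c\log t}^{\infty}=\sum_{i\geq t}\int_{c\log i}^{c\log(i+1)}$, bounds each piece by its left-endpoint value times the interval length $c\log\tfrac{i+1}{i}\leq c/i$, and sums the resulting $c/i^{3}$ terms to get $O(1/t^{2})$. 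Both routes arrive at an $O(t^{-2})$ bound on the $t$-th summand and hence convergence. Your approach buys brevity and transparency by using a named, one-line inequality; the paper's route is self-contained but more laborious and somewhat obscures the simple reason the series converges.
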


\begin{proof}
Since $\liminf_{t\rightarrow+\infty}\frac{c_t}{\log t}>0$ implies that there is a $c>0$ such that for any sufficiently large $t>0$, $c_t>c\log t$, it suffices  to prove $\sum_{t=1}^{+\infty}\int_{|x|\geqslant c\log t}e^{-\frac{x^2}{2}}\,dx<+\infty$.
Note that for $t\geq\max\left\{ e^{\frac{4}{c^2}},2\right\}$,
\begin{eqnarray*}
\int_{|x|\geqslant c\log t}e^{-\frac{x^2}{2}}\,dx&=&\sum_{i=t}^{+\infty}\int_{c\log i}^{c\log(i+1)}e^{-\frac{x^2}{2}}\,dx<\sum_{i=t}^{+\infty}\int_{c\log i}^{c\log(i+1)}e^{-\frac{c^2\log^2 i}{2}}\,dx\nonumber\\
&=&\sum_{i=t}^{+\infty}c\log\left(\frac{i+1}{i}\right)i^{-\frac{c^2\log i}{2}}<\sum_{i=t}^{+\infty}\frac{c}{i^{1+\frac{c^2\log i}{2}}}\\
&\leq&\sum_{i=t}^{+\infty}\frac{c}{i^{3}}<\int_{t-1}^{+\infty}\frac{c}{x^3}\,dx=\frac{c}{2(t-1)^2},\nonumber
\end{eqnarray*}
which leads to Lemma \ref{one} immediately.
\end{proof}

\begin{lemma}\label{two}
For any $n\in \mathbb{Z}^+$, let $\lbrace A_m^n\rbrace_{m\geq1}$ be a sequence of events that $A_m^n\triangleq{\lbrace y_{mn},y_{mn+1},\dots,y_{mn+n-1}\in S_b^L\rbrace}$.
If \ref{SbL>0} holds, then $\sum_{m=1}^{+\infty}I_{A_m^n}=+\infty$ almost surely.
\end{lemma}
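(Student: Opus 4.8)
The plan is to prove this by the conditional (second) Borel--Cantelli lemma applied along the block filtration $\mathcal{G}_m\triangleq\mathcal{F}^y_{(m+1)n-1}$. Since $A^n_m$ depends only on $y_{mn},\dots,y_{mn+n-1}=y_{(m+1)n-1}$, it is $\mathcal{G}_m$-measurable, and $\mathcal{G}_{m-1}=\mathcal{F}^y_{mn-1}$ is exactly the information available just before block $m$ begins. By L\'evy's conditional Borel--Cantelli lemma, $\sum_m I_{A^n_m}=+\infty$ a.s.\ on the event $\{\sum_m P(A^n_m\mid\mathcal{G}_{m-1})=+\infty\}$, so it suffices to show that $\sum_m P(A^n_m\mid\mathcal{F}^y_{mn-1})=+\infty$ almost surely. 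In fact I would establish the stronger statement that each block probability is bounded below by a positive deterministic constant.

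The heart of the matter is the following uniform estimate, which is where condition \dref{SbL>0} enters. I claim that
\[
c_0\triangleq\inf_{\sigma\geq1}\frac{1}{\sqrt{2\pi}\,\sigma}\int_{S_b^L}e^{-x^2/(2\sigma^2)}\,dx>0 .
\]
To prove it I would rescale by $u=x/\sigma$, so the integral becomes the standard-Gaussian mass of the dilated Borel set $S_b^L/\sigma$, and then split into two regimes. For large $\sigma$, restricting the integral to $[-1,1]$ and using the lower density bound $\ell(S_b^L\cap[-\sigma,\sigma])\geq p_b\,\sigma$ furnished by \dref{SbL>0} yields a bound of order $\tfrac{1}{\sqrt{2\pi}}e^{-1/2}p_b>0$; the positive proportion of $S_b^L$ is precisely what prevents the dilated set from thinning out near the origin as $\sigma\to\infty$. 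For $\sigma$ in a bounded range $[1,l_0]$, positivity of $\ell(S_b^L)$ together with the full support of the Gaussian makes $\sigma\mapsto\frac{1}{\sqrt{2\pi}\sigma}\int_{S_b^L}e^{-x^2/(2\sigma^2)}dx$ a strictly positive continuous function on a compact set, hence bounded below by its positive minimum. Combining the two regimes gives $c_0>0$. I expect this uniform-in-$\sigma$ lower bound, and in particular controlling the unbounded-variance regime, to be the main obstacle.

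Granting $c_0>0$, the rest is routine. Because $y_{t+1}\mid\mathcal{F}^y_t\sim N(0,\sigma_t^2)$ with $\sigma_t^2=r_t/r_{t-1}\geq1$ by \dref{mean} and \dref{var}, the estimate above gives $P(y_{t+1}\in S_b^L\mid\mathcal{F}^y_t)\geq c_0$ for every $t$. Writing $A^n_m$ as a product of $n$ indicators and peeling them off one at a time by successive conditioning --- applying the bound to the innermost factor $E[I_{\{y_{mn+n-1}\in S_b^L\}}\mid\mathcal{F}^y_{mn+n-2}]\geq c_0$ and iterating outward --- yields $P(A^n_m\mid\mathcal{F}^y_{mn-1})\geq c_0^{\,n}$. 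Hence $\sum_m P(A^n_m\mid\mathcal{G}_{m-1})\geq\sum_m c_0^{\,n}=+\infty$ surely, and the conditional Borel--Cantelli lemma delivers $\sum_m I_{A^n_m}=+\infty$ almost surely, as claimed.
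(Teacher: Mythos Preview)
Your proposal is correct and follows essentially the same route as the paper: bound $P(y_{t+1}\in S_b^L\mid\mathcal{F}^y_t)$ below by a deterministic positive constant, peel off the $n$ indicators by iterated conditioning to obtain $P(A^n_m\mid\mathcal{F}^y_{mn-1})\geq c_0^{\,n}$, and invoke the conditional Borel--Cantelli--L\'evy lemma. The only minor variation is in how the uniform one-step bound is derived: the paper uses the piecewise continuity of $f$ to fix a point $\rho$ interior to an interval contained in $S_b^L$ and restricts the Gaussian integral to the window $|x-\rho\sigma_t^{-1}|\leq1$ (so a single estimate covers all $\sigma_t\geq1$), whereas your two-regime split---large $\sigma$ via \dref{SbL>0}, bounded $\sigma$ via continuity and compactness---reaches the same conclusion without appealing to piecewise continuity.
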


\begin{proof}
At first, the piecewise continuity of $f$ infers that $S^L_b$ contains a nonempty interval. Taking a point $\rho$ from this interval,
by \dref{SbL>0},  there exists a  $c_1>0$ such that
$
\inf_{l>0}\frac{S_b^L\cap[\rho-l,\rho+l]}{l}>c_1.
$
Note that $y_{i+1}$ is conditional Gaussian with the conditional mean $m_i=0$ and variance $\sigma_i^2$ by \dref{mean} and \dref{var}, it yields that
\begin{eqnarray*}
P\left(y_{i+1}\in S_b^L|\mathcal{F}_i^y\right)&=&\frac{1}{\sqrt{2\pi}}\int_{|x\sigma_i|\in S_b^L}e^{-\frac{x^2}{2}}\,dx\\
&\geq &\frac{1}{\sqrt{2\pi}}\int_{|x\sigma_i|\in S_b^L,|x-\rho\sigma_{i}^{-1}|\leqslant 1}e^{-\frac{x^2}{2}}\,dx\nonumber\\
&\geq &\ell(x:|x\sigma_i|\in S_b^L,|x-\rho\sigma_{i}^{-1}|\leqslant 1)\cdot \frac{1}{\sqrt{2\pi}}e^{-\frac{(1+|\rho|)^2}{2}}\nonumber\\
&=&\dfrac{\ell(S_b^L\cap [\rho-\sigma_i,\rho+\sigma_i])}{\sigma_i}\cdot \frac{1}{\sqrt{2\pi}}e^{-\frac{(1+|\rho|)^2}{2}}>\frac{c_1}{\sqrt{2\pi}}e^{-\frac{(1+|\rho|)^2}{2}}.\nonumber
\end{eqnarray*}
As a result,  for all $m\geq 1$,
\begin{eqnarray}
P(A_m^n|\mathcal{F}_{mn-1}^y)&=&E\left\lbrace\prod_{i=mn}^{mn+n-1}I_{A_i^1}\bigg|\mathcal{F}_{mn-1}^y\right\rbrace\nonumber\\
&=&E\left\lbrace E[I_{A_{mn+n-1}^{1}}|\mathcal{F}_{mn+n-2}^y]\cdot\prod_{i=mn}^{mn+n-2}I_{A_i^1}\bigg|\mathcal{F}_{mn-1}^y\right\rbrace\nonumber\\
&=&E\Bigg\lbrace P(y_{mn+n-1}\in S_b^L|\mathcal{F}_{mn+n-2}^y)\cdot\prod_{i=mn}^{mn+n-2}I_{A_i^1}\bigg|\mathcal{F}_{mn-1}^y\Bigg\rbrace\nonumber\\
&\geq &\frac{c_1}{\sqrt{2\pi}}e^{-\frac{(1+|\rho|)^2}{2}}\cdot E\left\lbrace\prod_{i=mn}^{mn+n-2}I_{A_i^1}\bigg|\mathcal{F}_{mn-1}^y\right\rbrace\geq \ldots\geq \left(\frac{c_1}{\sqrt{2\pi}}e^{-\frac{(1+|\rho|)^2}{2}}\right)^n.\nonumber
\end{eqnarray}
Consequently, we conclude that $\sum_{m=1}^{+\infty}P(A_m^n|\mathcal{F}_{mn-1}^y)=+\infty$. The lemma then follows  from the \textit{Borel-Cantelli-Levy} theorem.
\end{proof}

\begin{lemma}\label{three}
Denote $S_b(l_1,l_2)\triangleq{\lbrace x: |f(x)|< l_1+l_2|x|^b\rbrace}$ for $b>1$, $l_1\geq0,~l_2>0$. Let $\lbrace B_m\rbrace$ and $\lbrace C_m\rbrace$ be two sequences of the events defined by
\begin{eqnarray}
B_{m+1}&\triangleq &\left\lbrace y_{m+1}\in S_b(l_1,l_2),\sigma_{m+1}^2\geqslant r_m^q,\sigma_m^2\leqslant r_{m-1}^{\frac{1+q}{b-1-q}-\varepsilon}\right\rbrace,\nonumber\\
C_{m+1}&\triangleq &\left\lbrace y_{m+1}\in S_b(l_1,l_2),\sigma_{m+1}^2\geqslant \lambda,\sigma_m^2\leqslant r_{m-1}^{\frac{1}{b-1}-\varepsilon}\right\rbrace,\nonumber
\end{eqnarray}
where  $q\in (0,b-1),~\varepsilon\in(0,1)$ and $\lambda>1$.
If $\liminf_{t\to+\infty}\frac{r_t}{t}>0\,\, a.s.$,
then
\[
\sum_{m=1}^{\infty}I_{B_{m+1}}<+\infty\quad \mbox{and}\quad
\sum_{m=1}^{\infty}I_{C_{m+1}}<+\infty,\quad \mbox{a.s.}.
\]
\end{lemma}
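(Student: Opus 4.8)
The plan is to prove both assertions through the conditional Borel--Cantelli (Borel--Cantelli--Lévy) theorem already used in Lemma \ref{two}: since $B_{m+1}$ and $C_{m+1}$ are $\mathcal{F}_{m+1}^y$-measurable, it suffices to show that $\sum_m P(B_{m+1}\mid\mathcal{F}_m^y)<+\infty$ and $\sum_m P(C_{m+1}\mid\mathcal{F}_m^y)<+\infty$ almost surely. The engine of every estimate is the conditional Gaussianity \dref{mean}--\dref{var}: given $\mathcal{F}_m^y$, $y_{m+1}\sim N(0,\sigma_m^2)$, so each event can be dominated by a Gaussian tail once the variance constraints are translated into a lower bound on $|y_{m+1}|$.

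First I would treat $B_{m+1}$. Since $\sigma_{m+1}^2=1+f(y_{m+1})^2/r_m$, the requirement $\sigma_{m+1}^2\ge r_m^q$ reads $f(y_{m+1})^2\ge r_m(r_m^q-1)$, while membership $y_{m+1}\in S_b(l_1,l_2)$ gives $|f(y_{m+1})|<l_1+l_2|y_{m+1}|^b$. As $\liminf_t r_t/t>0$ forces $r_m\to+\infty$, combining these yields $|y_{m+1}|\ge c_0\,r_m^{(1+q)/(2b)}$ for all large $m$ and some $c_0>0$. Writing $\beta\triangleq(1+q)/(2b)$ and standardising by $\sigma_m=\sqrt{r_m/r_{m-1}}$, the Gaussian tail threshold for $|y_{m+1}|/\sigma_m$ becomes $c_0\,r_m^{\beta-1/2}r_{m-1}^{1/2}$, where $\beta-1/2=(1+q-b)/(2b)<0$ because $q<b-1$. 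The third constraint $\sigma_m^2\le r_{m-1}^{(1+q)/(b-1-q)-\varepsilon}$ is exactly $r_m\le r_{m-1}^{1+\alpha-\varepsilon}$ with $\alpha\triangleq(1+q)/(b-1-q)$, and since $\beta-1/2<0$ this upper bound on $r_m$ converts into the lower bound $c_0\,r_{m-1}^{(1+\alpha-\varepsilon)(\beta-1/2)+1/2}$ on the threshold.

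The crux is the exponent bookkeeping. A direct computation gives $(1+\alpha)(\beta-1/2)=-1/2$, so at $\varepsilon=0$ the threshold exponent collapses to $0$ --- the constraints are calibrated precisely to this critical balance --- and the $-\varepsilon$ slack leaves the strictly positive exponent $E\triangleq\varepsilon(1/2-\beta)>0$. Hence, on the $\mathcal{F}_m^y$-measurable set $\{\sigma_m^2\le r_{m-1}^{\alpha-\varepsilon}\}$ (off which $B_{m+1}$ is impossible and the conditional probability is $0$), one has $P(B_{m+1}\mid\mathcal{F}_m^y)\le\frac{1}{\sqrt{2\pi}}\int_{|x|\ge c_0 r_{m-1}^{E}}e^{-x^2/2}\,dx$. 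Because $\liminf_t r_t/t>0$ gives $r_{m-1}\ge c(m-1)$ eventually, we get $c_0 r_{m-1}^{E}\gtrsim m^{E}$ with $E>0$, so $\liminf_m c_0 r_{m-1}^{E}/\log m=+\infty$; applying Lemma \ref{one} pathwise on the full-measure event $\{\liminf_t r_t/t>0\}$ to the sequence $c_m=c_0 r_{m-1}^{E}$ yields $\sum_m P(B_{m+1}\mid\mathcal{F}_m^y)<+\infty$ a.s., whence $\sum_m I_{B_{m+1}}<+\infty$ a.s.

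The argument for $C_{m+1}$ is structurally identical: $\sigma_{m+1}^2\ge\lambda>1$ forces $f(y_{m+1})^2\ge(\lambda-1)r_m$, hence $|y_{m+1}|\gtrsim r_m^{1/(2b)}$, the standardised threshold is $r_m^{1/(2b)-1/2}r_{m-1}^{1/2}$ with $1/(2b)-1/2<0$, and the constraint $\sigma_m^2\le r_{m-1}^{1/(b-1)-\varepsilon}$ (that is, $r_m\le r_{m-1}^{1+\gamma-\varepsilon}$ with $\gamma\triangleq 1/(b-1)$) again meets the critical identity $(1+\gamma)\bigl(1/(2b)-1/2\bigr)=-1/2$, leaving the positive exponent $E'\triangleq\varepsilon(b-1)/(2b)$; the same pathwise use of Lemma \ref{one} closes the proof. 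I expect the main obstacle to be exactly this calibration: verifying the two critical identities and confirming that the $\varepsilon$-margin produces a genuinely super-logarithmic tail threshold, together with the care required to apply the deterministic Lemma \ref{one} along the random sequence $\{r_{m-1}\}$ on the event $\{\liminf_t r_t/t>0\}$.
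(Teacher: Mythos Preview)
Your proposal is correct and follows essentially the same route as the paper: translate $\sigma_{m+1}^2\ge r_m^q$ (resp.\ $\ge\lambda$) together with $y_{m+1}\in S_b(l_1,l_2)$ into a lower bound $|y_{m+1}|\gtrsim r_m^{(1+q)/(2b)}$ (resp.\ $r_m^{1/(2b)}$), use the $\mathcal{F}_m^y$-measurable constraint on $\sigma_m^2$ to bound the standardized Gaussian tail threshold below by a positive power of $r_{m-1}$, then invoke Lemma~\ref{one} pathwise and the Borel--Cantelli--L\'evy theorem. The only cosmetic difference is that the paper rewrites the variance constraint as $\sigma_m^2\le r_m^{\,\alpha/(1+\alpha)}$ and carries the exponent bookkeeping in terms of $r_m$ (arriving at $Q_m\sim r_m^{A/(2b)}$ with $A=\varepsilon(b-1-q)^2/(b-\varepsilon(b-1-q))$), whereas you keep everything in terms of $r_{m-1}$ and isolate the ``critical identity'' $(1+\alpha)(\beta-\tfrac12)=-\tfrac12$; the two exponents are of course consistent, and your formulation makes the role of the $\varepsilon$-slack slightly more transparent.
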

\begin{proof}
Let $A\triangleq\frac{\varepsilon(b-1-q)^2}{b-\varepsilon(b-1-q)}>0$ and
\begin{eqnarray*}
Q_m&\triangleq&{(r_m^A-r_{m}^{A-q})^\frac{1}{2b}(1-l_1(r_m^{1+q}-r_{m})^{-\frac{1}{2}})^{\frac{1}{b}}l_2^{-\frac{1}{b}}}\\
&=&l_2^{-\frac{1}{b}}r_m^{\frac{A}{2b}}(1-r_m^{-q})^{\frac{1}{2b}}(1-l_1(r_m^{1+q}-r_{m})^{-\frac{1}{2}})^{\frac{1}{b}}.
\end{eqnarray*}
Since $|f(y_{m+1})|< l_1+l_2|y_{m+1}|^b$ on set $\lbrace y_{m+1}\in S_b(l_1,l_2)\rbrace$ and $(r_m^{1+q}-r_{m})^{\frac{1}{2}}>l_1$ for all sufficiently large $m$, one has
\begin{eqnarray}\label{qm}
P(B_{m+1}|\mathcal{F}_{m}^y)
&\leq &P\left(|y_{m+1}|\geqslant ((r_m^{1+q}-r_{m})^{\frac{1}{2}}-l_1)^\frac{1}{b}l_2^{-\frac{1}{b}},\sigma_m^2\leqslant r_{m-1}^{\frac{1+q}{b-1-q}-\varepsilon}\Big|\mathcal{F}_{m}^y\right)\nonumber\\
&=&I_{\left\lbrace\sigma_m^2\leqslant r_m^{\frac{1+q-\varepsilon(b-1-q)}{b-\varepsilon(b-1-q)}}\right\rbrace}\cdot E\left\lbrace I_{\left\lbrace |y_{m+1}|\geqslant ((r_m^{1+q}-r_{m})^{\frac{1}{2}}-l_1)^\frac{1}{b}l_2^{-\frac{1}{b}}\right\rbrace}\bigg|\mathcal{F}_{m}^y\right\rbrace\nonumber\\
&=&I_{\left\lbrace\sigma_m^2\leqslant r_m^{\frac{1+q-\varepsilon(b-1-q)}{b-\varepsilon(b-1-q)}}\right\rbrace}\cdot\frac{1}{\sqrt{2\pi}}\int_{|x\cdot\sigma_m|\geqslant ((r_m^{1+q}-r_{m})^{\frac{1}{2}}-l_1)^\frac{1}{b}l_2^{-\frac{1}{b}}}e^{-\frac{x^2}{2}}\,dx\nonumber\\
&\leq &\frac{1}{\sqrt{2\pi}}\int_{\Big|x\cdot r_m^{\frac{1}{2}\frac{1+q-\varepsilon(b-1-q)}{b-\varepsilon(b-1-q)}}\Big|\geqslant ((r_m^{1+q}-r_{m})^{\frac{1}{2}}-l_1)^\frac{1}{b}l_2^{-\frac{1}{b}}}e^{-\frac{x^2}{2}}\,dx\nonumber\\
&=&\frac{1}{\sqrt{2\pi}}\int_{|x|\geqslant Q_m}e^{-\frac{x^2}{2}}\,dx.
\end{eqnarray}
Since $\liminf_{m\to+\infty}\frac{r_m}{m}>0$ implies
$\liminf_{m\to+\infty}\frac{Q_m}{\log m}>0,$
  Lemma \ref{one} shows that $$\sum_{m=1}^{+\infty}\frac{1}{\sqrt{2\pi}}\int_{|x|\geqslant Q_m}e^{-\frac{x^2}{2}}\,dx
<+\infty.$$
Consequently, by \dref{qm}, $\sum_{m=1}^{+\infty}P(B_{m+1}|\mathcal{F}_{m}^y)<+\infty$,
which  leads to $\sum_{m=1}^{\infty}I_{B_{m+1}}<+\infty$, in view of
the \textit{Borel-Cantelli-Levy} theorem.

Let $B\triangleq\frac{\varepsilon(b-1)^2}{b-\varepsilon(b-1)}>0$ and $Q_m^{(1)}\triangleq{((\lambda-1)r_m^B)^\frac{1}{2b}(1-l_1((\lambda-1)r_m)^{-\frac{1}{2}})^{\frac{1}{b}}l_2^{-\frac{1}{b}}}$.
The next claim is treated in a   similar manner  by noting that for all sufficiently large $m$,
\begin{eqnarray}\label{qm1}
P(C_{m+1}|\mathcal{F}_{m}^y)
&\leq &P\left(|y_{m+1}|\geqslant (((\lambda-1)r_m)^{\frac{1}{2}}-l_1)^\frac{1}{b}l_2^{-\frac{1}{b}},\sigma_m^2\leqslant r_{m-1}^{\frac{1}{b-1}-\varepsilon}\Big|\mathcal{F}_{m}^y\right)\nonumber\\
&=&I_{\left\lbrace\sigma_m^2\leqslant r_m^{\frac{1-\varepsilon(b-1)}{b-\varepsilon(b-1)}}\right\rbrace}\cdot\frac{1}{\sqrt{2\pi}}\int_{|x\cdot\sigma_m|\geqslant (((\lambda-1)r_m)^{\frac{1}{2}}-l_1)^\frac{1}{b}l_2^{-\frac{1}{b}}}e^{-\frac{x^2}{2}}\,dx\nonumber\\
&\leq &\frac{1}{\sqrt{2\pi}}\int_{\big|x\cdot r_m^{\frac{1}{2}\frac{1-\varepsilon(b-1)}{b-\varepsilon(b-1)}}\big|\geqslant (((\lambda-1)r_m)^{\frac{1}{2}}-l_1)^\frac{1}{b}l_2^{-\frac{1}{b}}}e^{-\frac{x^2}{2}}\,dx\nonumber\\
&=&\frac{1}{\sqrt{2\pi}}\int_{|x|\geqslant Q_m^{(1)}}e^{-\frac{x^2}{2}}\,dx.
\end{eqnarray}
Then, by  $\liminf_{m\to+\infty}\frac{Q_m^{(1)}}{\log m}>0$ and  Lemma \ref{one}, $$\sum_{m=1}^{+\infty}\frac{1}{\sqrt{2\pi}}\int_{|x|\geqslant Q_m^{(1)}}e^{-\frac{x^2}{2}}\,dx
<+\infty,$$
which, together with \dref{qm1}, implies $\sum_{m=1}^{+\infty}P(C_{m+1}|\mathcal{F}_{m}^y)<+\infty$.
 Finally,  with probability $1$, $\sum_{m=1}^{\infty}I_{C_{m+1}}<+\infty$  according to the \textit{Borel-Cantelli-Levy} theorem again.
\end{proof}

\begin{lemma}\label{five}
If $\ell(\lbrace x:|f(x)|>0\rbrace)>0$, then $\liminf_{t\to+\infty}\frac{r_t}{t}>0$ almost surely.
\end{lemma}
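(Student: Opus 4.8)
The plan is to reduce the claim to showing that the output $y_t$ visits a fixed interval with positive asymptotic frequency. Since $\ell(\{x:|f(x)|>0\})>0$ and $f$ is piecewise continuous, at least one continuity piece carries positive measure of $\{|f|>0\}$; picking a continuity point $x_0$ with $f(x_0)\neq 0$ in its interior and using continuity yields a closed interval $[c,d]\ni x_0$ and a constant $\epsilon_0>0$ with $|f(x)|\geq\epsilon_0$ for all $x\in[c,d]$. Writing $N_t\triangleq\sum_{i=1}^{t}I_{\{y_i\in[c,d]\}}$, we then have $r_t=P_0^{-1}+\sum_{i=0}^{t}f(y_i)^2\geq\epsilon_0^2 N_t$, so it suffices to prove $\liminf_{t\to\infty}N_t/t>0$ almost surely.

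Next I would quantify the one-step chance of landing in $[c,d]$. By \dref{mean}--\dref{var}, $y_{i+1}$ is conditionally $N(0,\sigma_i^2)$ with $\sigma_i^2=r_i/r_{i-1}\geq 1$, so on $[c,d]$ the Gaussian density is at least $(\sqrt{2\pi}\,\sigma_i)^{-1}e^{-R^2/2}$ with $R\triangleq\max(|c|,|d|)$, giving $p_i\triangleq P(y_{i+1}\in[c,d]\mid\mathcal{F}_i^y)\geq C_0/\sigma_i=C_0\sqrt{r_{i-1}/r_i}$ for a constant $C_0>0$. Since $I_{\{y_{i+1}\in[c,d]\}}-p_i$ is a bounded martingale difference sequence, the martingale strong law (equivalently the conditional Borel--Cantelli/L\'evy lemma already used in Lemmas \ref{two}--\ref{three}) gives $N_t/\sum_{i<t}p_i\to 1$ almost surely on the event $\{\sum_i p_i=+\infty\}$; in particular $N_t\geq\tfrac12\sum_{i<t}p_i$ for all large $t$ there.

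The heart of the argument is a lower bound for $\sum_{i<t}p_i$ that feeds back on $r_t$ itself. Using $\sqrt{x}\geq x$ on $[0,1]$ together with the telescoping inequality $\frac{\phi_i^2}{r_i}=\frac{r_i-r_{i-1}}{r_i}\leq\log r_i-\log r_{i-1}$, I obtain
\[
\sum_{i=0}^{t-1}p_i\;\geq\;C_0\sum_{i=0}^{t-1}\frac{r_{i-1}}{r_i}\;=\;C_0\Big(t-\sum_{i=0}^{t-1}\frac{\phi_i^2}{r_i}\Big)\;\geq\;C_0\big(t-\log r_{t-1}+\log P_0^{-1}\big).
\]
On $\{\sum_i p_i=+\infty\}$ this combines with $r_t\geq\epsilon_0^2N_t\geq\tfrac{\epsilon_0^2C_0}{2}\big(t-\log r_{t-1}+\log P_0^{-1}\big)$ to yield a self-referential estimate of the form $r_t\geq c\,(t-\log r_{t-1})$ for a constant $c>0$ and all large $t$; evaluating it along a subsequence attaining $\liminf_t r_t/t$ and using $\log r_{t-1}=o(t)$ whenever $r_t/t$ stays bounded closes the loop and forces $\liminf_t r_t/t\geq c>0$. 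On the complementary event $\{\sum_i p_i<+\infty\}$ the same display forces $\log r_{t-1}\geq t-O(1)$, i.e. $r_t$ grows at least exponentially, so $\liminf_t r_t/t=+\infty$ there. Combining the two cases gives the lemma.

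I expect the main obstacle to be exactly this feedback between $r_t$ and the visiting frequency: when $\sigma_i$ is large the one-step probability $p_i$ degrades like $1/\sigma_i=\sqrt{r_{i-1}/r_i}$, so a naive bound is useless, and the saving grace is that large ratios $r_i/r_{i-1}$ cannot occur too often because $\sum_i\phi_i^2/r_i$ is controlled by $\log r_{t-1}$. Turning the resulting circular inequality $r_t\geq c\,(t-\log r_{t-1})$ into a genuine linear lower bound, and pairing it with the clean dichotomy on $\sum_i p_i$, is the delicate part; the remaining steps are routine.
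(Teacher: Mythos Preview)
Your argument is correct but takes a genuinely different route from the paper. The paper sidesteps the self-referential inequality by a pointwise dichotomy: it considers the union event $F_{m+1}=\{\sigma_m^2\geq 2\}\cup\{y_{m+1}\in T_{1/n}\cap[-i,i]\}$ with $T_{1/n}=\{|f|>1/n\}$, observes that when $\sigma_m^2<2$ the conditional Gaussian density on $[-i,i]$ is bounded below by a fixed constant so that $P(F_{m+1}\mid\mathcal{F}_m^y)\geq c_2>0$ uniformly, and applies the martingale SLLN to get $\sum_{m\leq t}I_{F_m}\geq(c_2/2)t$; then either $\{\sigma_m^2\geq 2\}$ occurs at least $(c_2/4)t$ times, in which case the product $r_t=r_0\prod_m\sigma_m^2$ grows exponentially, or $\{y_m\in T_{1/n}\}$ does, giving $r_t\geq(c_2/4n^2)t$ directly. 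Your approach instead extracts a single analytic inequality $\sum_{i<t}p_i\geq C_0(t-\log r_{t-1}+\log P_0^{-1})$ via the chain $p_i\geq C_0\sqrt{r_{i-1}/r_i}\geq C_0\,r_{i-1}/r_i$ together with the telescoping bound $\sum_i(1-r_{i-1}/r_i)\leq\log r_{t-1}-\log r_{-1}$, and then closes a global feedback loop, exploiting monotonicity of $r_t$ so that $r_{t_k}/t_k\to 0$ forces $\log r_{t_k-1}\leq\log t_k=o(t_k)$. What the paper's trick buys is that it never needs the ratio form of the conditional Borel--Cantelli lemma (only $\sum_m(I_{F_m}-P(F_m\mid\mathcal{F}_{m-1}^y))=o(t)$) and avoids any circularity by absorbing the troublesome large-$\sigma_m$ regime into $F_m$ itself; what yours buys is a sharper quantitative bound tying $\sum p_i$ directly to $\log r_t$, with no ad hoc threshold on $\sigma_m^2$.
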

\begin{proof} For any $ c>0$,  denote $T_c\triangleq{\left\lbrace x\in \mathbb{R}:|f(x)|> c\right\rbrace}$.
Since $\lbrace x:|f(x)|>0\rbrace=\bigcup_{n=1}^{+\infty}T_{\frac{1}{n}}$, there exists an integer $n\geq 1$ such that $\ell\left(T_{\frac{1}{n}}\right)>0$. Moreover, $\cup_{i=1}^{+\infty}(T_{\frac{1}{n}}\cap [-i,i])=T_{\frac{1}{n}}\cap\left(\cup_{i=1}^{+\infty}[-i,i]\right)=T_{\frac{1}{n}}$,
which shows that there is an  $i\geq 1$ satisfying $\ell\left(T_{\frac{1}{n}}\cap[-i,i]\right)>0$.
For the two integers $n$ and $i$ defined above, denote
\begin{eqnarray}
d\triangleq {\ell\left(T_{\frac{1}{n}}\cap [-i,i]\right)},\quad E_{m+1}\triangleq {\left\lbrace y_{m+1}\in T_{\frac{1}{n}},\sigma_m^2<2\right\rbrace},\quad m\geq 1.\nonumber
\end{eqnarray}
We estimate the conditional probability of $E_{m+1}$ for each $m\geq 1$ by
\begin{eqnarray}\label{em}
P(E_{m+1}|\mathcal{F}_{m}^y)
&=&I_{\left\lbrace \sigma_{m}^2<2\right\rbrace}\cdot \frac{1}{\sqrt{2\pi}}\int_{|x\cdot\sigma_{m}|\in T_{\frac{1}{n}}}e^{-\frac{x^2}{2}}\,dx\nonumber\\
&\geq &I_{\left\lbrace \sigma_{m}^2<2\right\rbrace}\cdot\frac{1}{\sqrt{2\pi}}\int_{|x\cdot\sigma_{m}|\in T_{\frac{1}{n}}\cap{[-i,i]}}e^{-\frac{x^2}{2}}\,dx\geq I_{\left\lbrace\sigma_{m}^2<2\right\rbrace}\cdot \frac{1}{\sqrt{2\pi}}\frac{d}{\sigma_{m}}{e^{-\frac{i^2}{2\sigma_{m}^2}}}\nonumber\\
&\geq &I_{\left\lbrace\sigma_{m}^2<2\right\rbrace}\cdot \frac{1}{\sqrt{2\pi}}\frac{d}{\sigma_{m}}{e^{-\frac{i^2}{2}}}\geq c_2 I_{\left\lbrace \sigma_{m}^2<2\right\rbrace},
\end{eqnarray}
where $c_2\triangleq{\frac{1}{\sqrt{2\pi}}\frac{d}{\sqrt{2}}e^{-\frac{i^2}{2}}}\in(0,1)$.
Next, for each $m\geq 1$, denote
\begin{eqnarray}\label{fm}
F_{m+1}&\triangleq&{\left\lbrace \sigma_{m}^2\geqslant 2\right\rbrace\cup\left\lbrace y_{m+1}\in T_{\frac{1}{n}}\right\rbrace}=\left\lbrace\sigma_{m}^2\geqslant 2\right\rbrace \cup E_{m+1},
\end{eqnarray}
which, together with \dref{em},  leads to
\begin{eqnarray}\label{pfm}
P(F_{m+1}|\mathcal{F}_{m}^y)=E\left\lbrace I_{\left\lbrace \sigma_{m}^2\geqslant 2\right\rbrace}+I_{E_{m+1}}\Big|\mathcal{F}_{m}^y\right\rbrace&\geq &I_{\left\lbrace \sigma_{m}^2\geqslant 2\right\rbrace}+c_2\cdot I_{\left\lbrace \sigma_{m}^2<2\right\rbrace}\geq c_2.
\end{eqnarray}

Now, set $x_{m}\triangleq I_{F_{m}}-E[I_{F_{m}}|\mathcal{F}_{m-1}^y]$,
and it is clear that $\sup_{m\geq 1}E\left\lbrace x^2_{m}|\mathcal{F}_{m-1}^y\right\rbrace <+\infty$.
Since $\lbrace x_m,\mathcal{F}_{m}^y\rbrace_{m\geq1}$ forms a martingale difference sequence, by  applying the strong law of  large numbers for the   martingale differences, one has $\sum_{m=1}^{t}x_m=o(t)$ almost surely. Therefore,  by \dref{pfm}, for all sufficiently large $t$,
\begin{eqnarray}
\frac{\sum_{m=1}^{t}I_{F_m}}{t}=o(1)+\frac{\sum_{m=1}^{t}P(F_{m}|\mathcal{F}_{m-1}^y)}{t}>\dfrac{c_2}{2},\nonumber
\end{eqnarray}
 and hence
 $\sum_{m=1}^{t}I_{F_m}>\frac{c_2}{2}t$ almost surely.  As a consequence,
\begin{eqnarray}
\max\left\lbrace \sum_{m=1}^{t}I_{\left\lbrace \sigma_{m}^2\geqslant 2\right\rbrace},\sum_{m=1}^{t}I_{\left\lbrace y_m\in T_{\frac{1}{n}}\right\rbrace} \right\rbrace&\geq &\frac{1}{2}\left(\sum_{m=1}^{t}I_{\left\lbrace \sigma_{m}^2\geqslant 2\right\rbrace}+\sum_{m=1}^{t}I_{\left\lbrace y_m\in T_{\frac{1}{n}}\right\rbrace}\right)\nonumber\\
&=&\frac{1}{2}\sum_{m=1}^{t}I_{F_m}>\frac{c_2}{4}t,\quad\mbox{a.s.}.\nonumber
\end{eqnarray}

We complete the remainder of the proof by considering the following two cases:\\
\emph{Case 1:} $\sum_{m=1}^{t}I_{\lbrace\sigma_{m}^2\geqslant 2\rbrace}>\frac{c_2}{4}t.$
This means the events $\lbrace\sigma_{m}^2\geqslant 2\rbrace$, $1\leq m\leq t$,  occur at least $\lceil\frac{c_2}{4}t\rceil$ times, and hence $r_t=r_0\cdot\prod_{m=1}^t\sigma_{m}^2\geq 2^{\frac{c_2}{4}t}P_0^{-1}$.\\
\emph{Case 2:} $\sum_{m=1}^{t}I_{\lbrace y_m\in T_{\frac{1}{n}}\rbrace}>\frac{c_2}{4}t.$
Then, $\lbrace y_m\in T_{\frac{1}{n}}\rbrace$, $1\leq m\leq t$,  occur at least $\lceil\frac{c_2}{4}t\rceil$ times.  Since $\left\lbrace y_m\in T_{\frac{1}{n}}\right\rbrace=\lbrace f^2(y_m)> \frac{1}{n^2}\rbrace$,
one has $r_t=P_0^{-1}+\sum_{m=0}^{t}f^2(y_m)>\frac{c_2}{4n^2}t$.

Finally, by combining Case 1 and Case 2, it  shows that  $r_t\geq \min\left\lbrace 2^{\frac{c_2}{4}t}P_0^{-1},\frac{c_2}{4n^2}t\right\rbrace$ for all sufficiently large $t$ almost surely,
which proves the lemma.
\end{proof}

\begin{lemma}\label{four}
Let   \dref{fk1k2} hold for some  $k_1,k_2> 0$. If there is a  set $D$ with $P(D)>0$ such that
$\sup\limits_{t}\sigma_t=+\infty$  on $D$  and $\liminf_{t\to+\infty}\frac{r_t}{t}>0$  almost surely, then $$\lim_{t\rightarrow +\infty}\sigma_t=+\infty,\quad \mbox{a.s.}\quad \mbox{on}\,\,D.$$
\end{lemma}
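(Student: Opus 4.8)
The plan is to argue by contradiction, reducing the statement to a single one-step ``escape'' estimate for the conditional variance and then invoking the \textit{Borel--Cantelli--Levy} theorem, exactly in the spirit of Lemmas \ref{two}, \ref{three} and \ref{five}. The starting point is the identity $\sigma_t^2=r_t/r_{t-1}=1+f(y_t)^2/r_{t-1}$, together with the facts that $y_{t+1}$ is, conditionally on $\mathcal{F}_t^y$, Gaussian with mean $0$ and variance $\sigma_t^2$, and that $\sigma_t\ge 1$ for every $t$ (since $r_t$ is nondecreasing).

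Suppose the conclusion fails. Then $P(D\cap\{\liminf_t\sigma_t<+\infty\})>0$, and since $\{\liminf_t\sigma_t<+\infty\}=\bigcup_{C\in\mathbb{N}}\{\liminf_t\sigma_t\le C\}$, there is an integer $C\ge 1$ for which $D'\triangleq D\cap\{\liminf_t\sigma_t\le C\}$ has positive probability. I then replace $C$ by $C+1\ge 2$, so that on $D'$ one has $\sigma_t\le C$ infinitely often while, because $D'\subseteq D$, also $\sigma_t>C$ infinitely often. The goal is to show that such oscillation is impossible.

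The key step is to estimate the conditional probability of a one-step escape from the band $\{\sigma\le C\}$. On $\{\sigma_s\le C\}$ the event $\{\sigma_{s+1}>C\}$ forces $f(y_{s+1})^2>(C^2-1)r_s$ with $C^2-1>0$; by the growth bound \dref{fk1k2} this in turn requires $|y_{s+1}|\ge \frac{1}{2k_2}\log\frac{(C^2-1)r_s}{k_1^2}=:c_s$. Since $y_{s+1}$ is conditionally $N(0,\sigma_s^2)$ with $\sigma_s\le C$, I obtain $P(\sigma_{s+1}>C\mid\mathcal{F}_s^y)\,I_{\{\sigma_s\le C\}}\le \frac{1}{\sqrt{2\pi}}\int_{|x|\ge c_s/C}e^{-x^2/2}\,dx$. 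The hypothesis $\liminf_t r_t/t>0$ gives $\liminf_s (c_s/C)/\log s>0$, so Lemma \ref{one} applies and yields $\sum_s P(\{\sigma_s\le C,\ \sigma_{s+1}>C\}\mid\mathcal{F}_s^y)<+\infty$ almost surely. Here the exponential bound \dref{fk1k2} is essential: it is precisely what turns ``crossing the fixed level $C>1$'' into a Gaussian tail event with threshold of order $\log s$, making the series summable. This summability is the main obstacle and the heart of the argument; the realization that exceeding the \emph{same} level $C>1$ already demands $f(y_{s+1})^2\gtrsim r_s$, hence an atypically large Gaussian increment, is what makes the naive single-step bound work.

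By the \textit{Borel--Cantelli--Levy} theorem the escape event $\{\sigma_s\le C,\ \sigma_{s+1}>C\}$ then occurs only finitely often, almost surely; equivalently, there is a (random) time after which $\sigma_s\le C$ implies $\sigma_{s+1}\le C$. On $D'$, $\sigma_s\le C$ happens for arbitrarily large $s$, so past that time the process is trapped, $\sigma_u\le C$ for all subsequent $u$, contradicting $\sigma_t>C$ infinitely often on $D'$. This forces $P(D')=0$ and hence $P(D\cap\{\liminf_t\sigma_t<+\infty\})=0$, giving $\lim_{t\to+\infty}\sigma_t=+\infty$ almost surely on $D$. The only technical point to watch is that the linear lower bound $r_t\ge c't$ carries a path-dependent constant $c'$; this is harmless, because Lemma \ref{one} only requires $\liminf_s c_s/\log s>0$, which holds on the almost sure event $\{\liminf_t r_t/t>0\}$.
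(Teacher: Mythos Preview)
Your proposal is correct and essentially identical to the paper's proof: both fix a level ($C$ for you, $z=C^2$ in the paper), bound the one-step escape probability of $\{\sigma_m\le C,\ \sigma_{m+1}>C\}$ by a Gaussian tail with threshold $\tfrac{1}{2k_2 C}\log((C^2-1)r_m/k_1^2)$ via \dref{fk1k2}, invoke Lemma~\ref{one} together with the \textit{Borel--Cantelli--Levy} theorem to conclude escapes occur only finitely often, and then derive a contradiction with $\sup_t\sigma_t=+\infty$ on $D$. The only cosmetic difference is that the paper argues directly that $\sigma_t^2$ eventually stays above every fixed $z>1$, whereas you phrase it as a contradiction against $\liminf_t\sigma_t<+\infty$ on a positive-probability set; the underlying estimate and logic are the same.
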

\begin{proof} Given a number $z>1$,  define $D_{m+1}\triangleq{\left\lbrace\sigma_{m}^2\leqslant z,\sigma_{m+1}^2\geqslant z\right\rbrace}$ and $Q_{m}^{(2)}\triangleq{\frac{1}{2k_2\sqrt{z}}\log(\frac{(z-1)r_m}{k_1^2})}$, $m\geq 0$.
Therefore,
\begin{eqnarray}\label{qm2}
P(D_{m+1}|\mathcal{F}_{m}^y)&=&P\left(f^2(y_{m+1})\geqslant (z-1)r_m,\sigma_{m}^2\leqslant z\Big|\mathcal{F}_{m}^y\right)\nonumber\\
&\leq &P\left(|y_{m+1}|\geqslant \frac{1}{k_2}\log\left(\frac{\sqrt{(z-1)r_m}}{k_1}\right),\sigma_{m}^2\leqslant z\bigg|\mathcal{F}_{m}^y\right)\nonumber\\
&=&I_{\left\lbrace \sigma_{m}^2\leqslant z\right\rbrace}\cdot \frac{1}{\sqrt{2\pi}}\int_{|x\cdot\sigma_{m}|\geqslant \frac{1}{2k_2}\log\left(\frac{(z-1)r_m}{k_1^2}\right)}e^{-\frac{x^2}{2}}\,dx\nonumber\\
&\leq &\frac{1}{\sqrt{2\pi}}\int_{|x\cdot\sqrt{z}|\geqslant \frac{1}{2k_2}\log\left(\frac{(z-1)r_m}{k_1^2}\right)}e^{-\frac{x^2}{2}}\,dx=\frac{1}{\sqrt{2\pi}}\int_{|x|\geqslant Q_{m}^{(2)}}e^{-\frac{x^2}{2}}\,dx.
\end{eqnarray}

Since $\liminf_{m\to+\infty}\frac{r_m}{m}>0$ shows $\liminf_{m\to+\infty}\frac{Q_{m}^{(2)}}{\log m}>0$, by
Lemma \ref{one}  and  \dref{qm2},
$$\sum_{m=1}^{+\infty}P(D_{m+1}|\mathcal{F}_{m}^y)\leq \sum_{m=1}^{+\infty}\frac{1}{\sqrt{2\pi}}\int_{|x|\geqslant Q_m^{(2)}}e^{-\frac{x^2}{2}}\,dx<+\infty,\quad  \mbox{a.s.}.$$
Taking account of the \textit{Borel-Cantelli-Levy} theorem, events $\lbrace D_{m}\rbrace$ occur only finite times  almost surely.
Therefore, if $\sigma_{m}^2\leqslant z$ for some  sufficiently large $m$ on a set $D'\subset D$ with $P(D')>0$, then $\sigma_{m+1}^2\leqslant z$, and hence $\sigma_{t}^2\leqslant z$ for all $t\geq m+2$ with probability $P(D')>0$. That is, $\sup_{t}\sigma_{t}<+\infty$ almost surely on $D'$, which  contradicts  to the assumption that $\sup_{t}\sigma_{t}=+\infty$ a.s. on $D$.    Hence, for any $z>1$, with probability $P(D)$,  there is a random $m>0$ such that     $\sigma_{t}^2>z$ for all $t\geq m$. This is exactly
$\lim_{t\rightarrow +\infty}\sigma_{t}=+\infty$ a.s. on $D$ by taking $z$ over all the natural numbers.
\end{proof}

\begin{lemma}\label{six}
For some  $a_0\geq 0$ and $\varepsilon_i\in (0,\frac{1}{i+1})$, define a sequence $\lbrace a_i\rbrace$ by
\begin{equation}\label{ai}
a_{i+1}=\frac{1+a_i}{b-1-a_i}-\varepsilon_i,\quad i\geq 0.
\end{equation}
(i) If $b\in(1,4)$ and $a_0=0$, then there exists a positive integer $k$ and a sequence $\lbrace\varepsilon_i\rbrace_{i=0}^{k-1}$, such that $a_i\in (0,b-1)$ for $1\leq i\leq k-1$ and $a_k>b-1$.\\
(ii) If $b\geq 4$ and $x_1<a_0<b-1$, where $x_1$ is the maximal real solution of equation $x^2-(b-2)x+1=0$, then there exists a positive integer $k$ and a sequence $\lbrace\varepsilon_i\rbrace_{i=0}^{k-1}$, such that $a_i\in (x_1,b-1)$ for $1\leq i\leq k-1$ and $a_k>b-1$.\\
(iii) If $b\geq 4$ and $a_0=0$, then there exists a sequence $\lbrace\varepsilon_i\rbrace_{i\geq 0}$, such that $\lim_{i\rightarrow +\infty}a_i=x_2$ and $a_i<a_{i+1}<x_2$ for all $i\geq0$, where $x_2$ is the minimum real solution of equation $x^2-(b-2)x+1=0$
\end{lemma}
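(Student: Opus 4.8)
The plan is to analyze the \emph{unperturbed} rational map $\Phi(x)\triangleq\frac{1+x}{b-1-x}$, so that the recursion \dref{ai} reads $a_{i+1}=\Phi(a_i)-\varepsilon_i$, and to deduce all three statements from the one‑dimensional dynamics of $\Phi$ on the half‑line $(-\infty,b-1)$. The preliminary facts I would record are: $\Phi$ is strictly increasing there, since $\Phi'(x)=b/(b-1-x)^2>0$; it satisfies
\[
\Phi(x)-x=\frac{x^2-(b-2)x+1}{b-1-x},
\]
so its fixed points are exactly the roots of $x^2-(b-2)x+1=0$; it blows up, $\Phi(x)\to+\infty$ as $x\to(b-1)^-$; and the convenient identity $\Phi(b-2)=b-1$ holds. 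The sign of the numerator $x^2-(b-2)x+1$, whose real minimum equals $b(4-b)/4$, governs the three regimes: it is strictly positive (no real fixed point) when $b<4$, and factors as $(x-x_1)(x-x_2)$ with $0<x_2\le1\le x_1<b-1$ and $x_1x_2=1$ when $b\ge4$.

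For (i) and (ii) I would run the same escape argument on the interval $J$, taken to be $(0,b-1)$ in (i) and $(x_1,b-1)$ in (ii). On $J$ both numerator and denominator are positive, so $\Phi(x)>x$; moreover $\Phi(x)-x$ is bounded below by a constant $\delta>0$ on $[a_0,b-1)$ (its numerator is at least its positive minimum over $[a_0,\infty)$, which is positive because $[a_0,\infty)$ lies to the right of every real zero of the numerator, and its denominator is at most $b-1$). Choosing each $\varepsilon_i\in\bigl(0,\min\{\delta/2,\tfrac{1}{i+1}\}\bigr)$ forces $a_{i+1}-a_i=\Phi(a_i)-a_i-\varepsilon_i\ge\delta/2>0$ while $a_i\in[a_0,b-1)$, so $\{a_i\}$ increases by a fixed amount at each step and must leave $[a_0,b-1)$ after finitely many steps. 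Letting $k$ be the first index with $a_k\ge b-1$, monotonicity keeps $a_1,\dots,a_{k-1}$ strictly between $a_0$ and $b-1$, hence inside the required open interval ($(0,b-1)$, or $(x_1,b-1)$ using $a_0>x_1$ in (ii)).

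The genuinely delicate point in (i)--(ii), and the step I expect to cost the most care, is upgrading $a_k\ge b-1$ to the \emph{strict} inequality $a_k>b-1$, because the perturbations $\varepsilon_i$ can only push iterates down. Here the identity $\Phi(b-2)=b-1$ does the work: if the last interior iterate satisfied $a_{k-1}\le b-2$, then monotonicity would give $a_k=\Phi(a_{k-1})-\varepsilon_{k-1}<\Phi(b-2)=b-1$, contradicting $a_k\ge b-1$; hence necessarily $a_{k-1}>b-2$, so $\Phi(a_{k-1})>b-1$ with a positive margin. It then suffices to additionally take $\varepsilon_{k-1}<\Phi(a_{k-1})-(b-1)$, which is compatible with the smallness already imposed, to secure $a_k>b-1$.

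Part (iii) is a convergence argument rather than an escape argument. For $b\ge4$ and $a_0=0$, on $(0,x_2)$ the numerator is again positive so $\Phi(x)>x$, while $\Phi$ increasing together with $\Phi(x_2)=x_2$ gives $\Phi(x)<x_2$; thus each iterate in $(0,x_2)$ is mapped strictly upward yet stays below $x_2$. I would then fix $\varepsilon_i=\min\bigl\{\tfrac12(\Phi(a_i)-a_i),\tfrac{1}{2(i+1)}\bigr\}\in\bigl(0,\tfrac{1}{i+1}\bigr)$, which guarantees $a_i<a_{i+1}=\Phi(a_i)-\varepsilon_i<x_2$ for every $i$, so $\{a_i\}$ is increasing, bounded above by $x_2$, and converges to some $L\le x_2$. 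The remaining obstacle is to pin $L=x_2$ despite the downward perturbations: I would argue by contradiction, noting that $a_{i+1}-a_i\ge\tfrac12(\Phi(a_i)-a_i)\to0$ forces $\Phi(a_i)-a_i\to0$, whereas continuity gives $\Phi(a_i)-a_i\to\Phi(L)-L>0$ whenever $L<x_2$. Hence $L=x_2$, completing (iii). The recurring theme, and the main difficulty throughout, is that keeping the one‑sided perturbations below a suitable fraction of the current gap $\Phi(a_i)-a_i$ is exactly what prevents the orbit from stalling below $b-1$ in (i)--(ii) or below $x_2$ in (iii).
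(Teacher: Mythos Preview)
Your proof is correct, but for parts (i) and (ii) you take a genuinely different route from the paper. The paper argues by contradiction: assuming the conclusion fails, it shows that for suitably chosen $\varepsilon_i$ the sequence $\{a_i\}$ can be kept strictly increasing inside $(0,b-1)$ (respectively $(x_1,b-1)$) forever, hence converges to a fixed point of $\Phi$; this is impossible because $b<4$ admits no real fixed point in (i), and because in (ii) the limit would satisfy $a>x_1$ yet solve $x^2-(b-2)x+1=0$, whose largest root is $x_1$. Your approach is direct: you exhibit a uniform lower bound $\delta>0$ on the gap $\Phi(x)-x$ over $[a_0,b-1)$ and force the iterates to climb by at least $\delta/2$ per step, so they escape in finitely many steps; the identity $\Phi(b-2)=b-1$ then handles the strict inequality $a_k>b-1$. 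Your argument is more constructive and implicitly bounds $k$, while the paper's contradiction framing avoids computing $\delta$ and sidesteps the strict-inequality issue altogether. For part (iii) the two proofs are essentially the same: an increasing sequence bounded above by $x_2$ with $\varepsilon_i\to 0$ must converge to a fixed point of $\Phi$ not exceeding $x_2$, hence to $x_2$; the paper simply uses $\varepsilon_i<\tfrac{1}{i+1}\to 0$ and passes to the limit in the recursion, rather than your explicit choice $\varepsilon_i=\min\{\tfrac12(\Phi(a_i)-a_i),\tfrac{1}{2(i+1)}\}$.
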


\begin{proof}
(i) Since $b\in (1,4)$, it is clear that
$a_0^2-(b-2)a_0+1>0$, and hence
\begin{eqnarray}\label{de}
\frac{1+a_0}{b-1-a_0}>a_0.
\end{eqnarray}
Note that $a_0=0<b-1$, there exists some $\varepsilon'_0\in(0,1)$ such that
\begin{equation}\label{a1}
a'_{1}\triangleq\frac{1+a_0}{b-1-a_0}-\varepsilon'_0>a_0=0.
\end{equation}

We now prove  assertion (i) by reduction to absurdity. Suppose
it is not true, then $a'_1\in (0,b-1]$. Obviously, \dref{ai} means any
\begin{eqnarray}\label{e0}
\varepsilon_0\in \left(\varepsilon'_0,\min\left\{ \frac{1+a_0}{b-1-a_0}-a_0, 1  \right\}\right)
\end{eqnarray}
achieves $a_1\in (0,b-1)$.
If for some $k\geq 1$,
there is a sequence $\lbrace\varepsilon_i\rbrace_{i=0}^{k-1}$ with $\varepsilon_i\in (0,\frac{1}{i+1})$   such that $a_i\in (0,b-1)$ for all $i\in [1, k]$,
according to the same arguments for \dref{de} and \dref{a1}, one has $a'_{k+1}\triangleq\frac{1+a_k}{b-1-a_k}-\varepsilon'_k>a_k>0$  for some $0<\varepsilon'_k <\frac{1}{k+1}$. Take  a $\varepsilon_k \in (0,\frac{1}{k+1})$ in a similar way of \dref{e0},  then the corresponding $a_{k+1}\in (0,b-1)$ in view of the  hypothesis that (i) does not hold.
This means, by induction, there exists a sequence $\lbrace \varepsilon_i\rbrace_{i=0}^{+\infty}$  with   $\varepsilon_i\in(0,\frac{1}{i+1})$   such that $0<a_i<a_{i+1}<b-1$ for all $i\geq 1$.
Hence, $\lim_{i\rightarrow+\infty}a_i$ exists and $\lim_{i\rightarrow+\infty}\varepsilon_i=0$.

Denote $a\triangleq \lim_{i\rightarrow+\infty}a_i$, then
\begin{eqnarray}\label{asolution}
a=\lim_{i\rightarrow+\infty}a_{i+1}=\lim_{i\rightarrow+\infty}\frac{1+a_i}{b-1-a_i}-\lim_{i\rightarrow+\infty}\varepsilon_i&=&\dfrac{1+a}{b-1-a}.
\end{eqnarray}
Therefore,  $a\in (0,b-1)$ and it serves as a solution of equation $x^2-(b-2)x+1=0$, which is impossible due to $b<4$.\\
(ii) The proof is almost the same as that of (i),  by noting that $x_1<b-1$ and
  for any $a_i\in(x_1,b-1)$,  $b\geq4 $ yields    $\frac{1+a_i}{b-1-a_i}>a_i$.
  As a matter of fact, if the assertion fails,  one can take a series of $\lbrace \varepsilon_i\rbrace_{i=0}^{+\infty}$  with   $\varepsilon_i\in(0,\frac{1}{i+1})$ such that $x_1<a_i<a_{i+1}<b-1$ for all $i\geq 0$.   There thus arises a contradiction between $a\triangleq\lim_{i\rightarrow+\infty}a_i\geq a_0>x_1$ and $a^2-(b-2)a+1=0$ by \dref{asolution} with $b\geq 4$.
\\
(iii) With $b\geq 4$,  any  $a_i<x_2$ implies  that there is a  $\varepsilon_{i}\in(0,\frac{1}{i+1})$ such that $a_{i+1}=\frac{1+a_i}{b-1-a_i}-\varepsilon_i>a_i$.
Since $a_i<x_2$ also leads to
$
\frac{1+a_i}{b-1-a_i}<x_2,\nonumber
$
\begin{equation}\label{ai+1<1}
a_i<a_{i+1}=\frac{1+a_i}{b-1-a_i}-\varepsilon_i<x_2.
 \end{equation}

Noting that  $a_0=0$,  \dref{ai+1<1} yields  that $a_0<a_1<x_2$ for some $\varepsilon_0\in (0,1)$. By induction, there is a
  sequence $\lbrace\varepsilon_i\rbrace_{i\geq 0}$ satisfying $\lim_{i\rightarrow+\infty}\varepsilon_i=0$  such that \dref{ai+1<1} holds for all  $i\geq0$. Thus, $\lim_{i\rightarrow+\infty}a_i$ exists. Letting $a\triangleq{\lim_{i\rightarrow+\infty}a_i}$ shows   that $a=\frac{1+a}{b-1-a}$ by \dref{asolution}, and hence $a=x_2$.
\end{proof}

\begin{lemma}\label{r/r<}
Let system \dref{sys} satisfy \dref{SbL>0} and  $\ell(\lbrace x:|f(x)|>0\rbrace)>0$, then
\begin{equation}\label{supr}
\sup\limits_{t} \sigma_{t}<+\infty,\quad \mbox{a.s.}.\nonumber
\end{equation}
\end{lemma}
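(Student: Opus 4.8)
The plan is to argue by contradiction. Suppose $P(D)>0$, where $D\triangleq\{\sup_t\sigma_t=+\infty\}$. Since $\ell(\{x:|f(x)|>0\})>0$, Lemma \ref{five} gives $\liminf_{t\to+\infty}r_t/t>0$ a.s., and then Lemma \ref{four} (which uses the standing hypothesis \dref{fk1k2} of this section) upgrades the $\limsup$ to a genuine limit, so that $\lim_{t\to+\infty}\sigma_t=+\infty$ a.s. on $D$. The goal is therefore to contradict the divergence of $\sigma_t^2$ by exhibiting infinitely many times at which $\sigma_t^2$ is forced below a fixed constant $\lambda>1$.

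Since \dref{SbL>0} is monotone in $b$ (for $b<b'$ one has $S_b^L\cap\{|x|\ge1\}\subseteq S_{b'}^L$, so enlarging $b$ only shrinks $S_b^L$ off a bounded set), I may assume $b\in(1,4)$ and fix the data produced by Lemma \ref{six}(i): an integer $k$ and $\{\varepsilon_i\}_{i=0}^{k-1}$ giving $a_0=0$, $a_i\in(0,b-1)$ for $1\le i\le k-1$, and $a_k>b-1$, with $a_{i+1}=\frac{1+a_i}{b-1-a_i}-\varepsilon_i$. The single-step implication I intend to run is: whenever $y_{m+1}\in S_b^L$ and $\sigma_m^2\le r_{m-1}^{a_{i+1}}$, the event $B_{m+1}$ of Lemma \ref{three} taken with $q=a_i$ and $\varepsilon=\varepsilon_i$ has threshold exactly $r_{m-1}^{(1+a_i)/(b-1-a_i)-\varepsilon_i}=r_{m-1}^{a_{i+1}}$; as $\sum_m I_{B_{m+1}}<+\infty$, that event fails for all large $m$, so $\sigma_{m+1}^2<r_m^{a_i}$. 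Iterating this descends the exponent from $a_k$ down to $a_1=\frac{1}{b-1}-\varepsilon_0$, and the final passage from $a_1$ to a constant is supplied by $C_{m+1}$ of Lemma \ref{three} with $\varepsilon=\varepsilon_0\in(0,1)$, whose threshold $r_{m-1}^{1/(b-1)-\varepsilon_0}$ matches $r_{m-1}^{a_1}$ and whose failure yields $\sigma_{m+1}^2<\lambda$.

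To lay out the chain precisely, on a block $y_{t_0},\dots,y_{t_0+k}\in S_b^L$ I would first establish the \emph{initialisation} $\sigma_{t_0}^2\le r_{t_0-1}^{a_k}$, then apply $B_{t_0+1},\dots,B_{t_0+k-1}$ (with $q=a_{k-1},\dots,a_1$ respectively) to descend to $\sigma_{t_0+k-1}^2<r_{t_0+k-2}^{a_1}$, and finally invoke $C_{t_0+k}$ to reach $\sigma_{t_0+k}^2<\lambda$. The initialisation is the one ingredient not contained verbatim in Lemma \ref{three}, but it follows by the same Borel--Cantelli--L\'evy scheme: on $\{y_{m+1}\in S_b^L,\ \sigma_{m+1}^2\ge r_m^{a_k}\}$ one has $f^2(y_{m+1})=(\sigma_{m+1}^2-1)r_m\ge(r_m^{a_k}-1)r_m$ together with $|f(y_{m+1})|<L|y_{m+1}|^b$, forcing $|y_{m+1}|\gtrsim r_m^{(1+a_k)/(2b)}$; since $\sigma_m^2=r_m/r_{m-1}\le P_0 r_m$ and $a_k>b-1$ makes $(1+a_k)/(2b)-\tfrac12>0$, the corresponding conditional Gaussian tail is taken beyond a threshold growing like a positive power of $r_m\gtrsim m$, so Lemma \ref{one} makes the conditional probabilities summable and these events occur only finitely often a.s.

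The final assembly is a Borel--Cantelli count. By Lemma \ref{two} with $n=k+1$, the blocks $\{y_{mn},\dots,y_{mn+k}\in S_b^L\}$ occur for infinitely many $m$ a.s., while the $B$-events, the $C$-event and the initialisation events occur only finitely often a.s. Hence, a.s. on $D$, there are arbitrarily late blocks along which every implication above is active, producing $\sigma_{t_0+k}^2<\lambda$ for infinitely many block-starts $t_0=mn$. This contradicts $\sigma_t\to+\infty$ on $D$, since $\lambda$ is a fixed constant, so $P(D)=0$ and $\sup_t\sigma_t<+\infty$ a.s. I expect the main obstacle to be the bookkeeping of the chain: verifying that the $B$-thresholds telescope \emph{exactly} through the recursion \dref{ai} of Lemma \ref{six}, that a single block of length $k+1$ is long enough to run all $k+1$ implications in order, and that $\liminf_t r_t/t>0$ keeps every Gaussian tail in Lemma \ref{one} summable at each stage.
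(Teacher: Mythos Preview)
Your proposal is correct and follows essentially the same approach as the paper: the same contradiction setup via Lemmas \ref{four}--\ref{five}, the same exponent ladder $\{a_i\}$ from Lemma \ref{six}(i), the same block supply from Lemma \ref{two}, and the same Gaussian tail estimates from Lemma \ref{three} doing the telescoping. The only organizational difference is that the paper traverses the chain \emph{backward} in time within each block---starting from $\sigma^2>\lambda$ at the end of the block and climbing to $\sigma^2>r^{a_k}$ at the start, then obtaining the contradiction from the fact that your ``initialisation'' event $G_{m+1}\triangleq\{y_{m+1}\in S_b^L,\ \sigma_{m+1}^2>r_m^{a_k}\}$ occurs only finitely often (the computation in \dref{pgm} is exactly the one you sketch)---whereas you run the chain \emph{forward}; the two directions are contrapositives of the same implications and the thresholds telescope identically through \dref{ai}.
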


\begin{proof}
We only need to consider the case $b>1$.
Since $\ell(\lbrace x:|f(x)|>0\rbrace)>0$,
Lemma \ref{five} shows $\liminf_{t\to+\infty}\frac{r_t}{t}>0$ a.s.. Assume $D\triangleq \{\sup\limits_{t} \sigma_{t}=+\infty\}$ satisfies $P(D)>0$,
 by Lemma \ref{four}, $\lim_{t\rightarrow +\infty}\sigma_{t}=+\infty$ on $D$ almost surely.
 That is, for any $\lambda>1,$ there is a random $N>0$ such that
\begin{equation}\label{rlam}
\sigma_t^2>\lambda\quad\mbox{if}\quad  t>N,\quad\mbox{a.s.}\quad \mbox{on}\,\,D.
\end{equation}

Now,  according to (i) of Lemma \ref{six}, for some integer $k\geq 1$,  one can construct a finite sequence $\lbrace a_i\rbrace_{i=1}^k$  satisfying $0=a_0<\ldots<a_{k-1}<b-1$, $a_k>b-1$ and
\begin{equation}\label{aj}
a_{i+1}=\frac{1+a_i}{b-1-a_i}-\varepsilon_i,
\end{equation}
where $\varepsilon_i\in (0,1)$ for $0\leq i\leq k-1$. Fix this $k$.
Applying  Lemma \ref{two} indicates that   $\sum_{m=1}^{+\infty}I_{A_m^{k+1}}=+\infty$ a.s. on $D$, that is, $\lbrace y_{m(k+1)},y_{m(k+1)+1},\ldots,y_{m(k+1)+k}\in S_b^L\rbrace$ occurs  infinitely many  times for $m$ on $D$. Let
\begin{equation}\label{ytk}
T\triangleq \{t_j:y_{t_j(k+1)},y_{t_j(k+1)+1},\ldots,y_{t_j(k+1)+k}\in S_b^L\}.
\end{equation}
Clearly, $|T|=\aleph_0$ on $D$ almost surely. In view of
 \dref{rlam} and \dref{ytk}, as long as $t_j\in T$
is sufficiently large,
\begin{equation}\label{ytk1}
 y_{t_j(k+1)+k}\in S_b^L\quad \mbox{and}\quad\sigma_{t_j(k+1)+k}^2>\lambda\quad\mbox{a.s.}\quad\mbox{on}\,\,D.
\end{equation}

Now, we use the induction method to prove that if $t_j\in T$ is  sufficiently large,
\begin{eqnarray}\label{ytk2}
\left\{
\begin{array}{l}
 y_{t_j(k+1)+k-i}\in S_b^L \\
 \sigma_{t_j(k+1)+k-i}^2>r_{t_j(k+1)+k-i-1}^{a_i}
 \end{array}
 \right.
 \quad\mbox{a.s.}\quad\mbox{on}\,\, D
\end{eqnarray}
holds   for all $1\leq i\leq k$, where $a_i$ is defined by \dref{aj}.
In fact, for $i=1$, Lemma \ref{three} with $l_1=0$, $l_2=L$ and $\varepsilon=\varepsilon_0$, yields $\sum_{m=1}^{\infty}I_{C_{m+1}}<+\infty$ a.s., which  indicates that the events ${\lbrace y_{m+1}\in S_b^L,\sigma_{m+1}^2\geqslant\lambda,\sigma_m^2\leqslant r_{m-1}^{\frac{1}{b-1}-\varepsilon_0}\rbrace}$
occur only  finite times for $m$. This, together with \dref{aj} and
\dref{ytk1}, leads to
\begin{eqnarray}
\sigma_{t_j(k+1)+k-1}^2&>&r_{t_j(k+1)+k-2}^{\frac{1}{b-1}-\varepsilon_0}=r_{t_j(k+1)+k-2}^{a_1},\quad\mbox{a.s.}\quad\mbox{on}\,\,D,\nonumber
\end{eqnarray}
when $t_j\in T$ is sufficiently large. Thus,  \dref{ytk2} holds for $i=1$  due to  \dref{ytk}.

Assume  that \dref{ytk2} is true  for some $i=s\in [1,k)$, when $t_j\in T$  is sufficiently large.
By Lemma \ref{three} again with $l_1=0$, $l_2=L$, $q=a_i$ and $\varepsilon=\varepsilon_i$, one has $\sum_{m=1}^{\infty}I_{B_{m+1}}<+\infty$. Hence, events $\lbrace y_{m+1}\in S_b^L,\sigma_{m+1}^2\geqslant r_m^{a_i},\sigma_m^2\leqslant r_{m-1}^{\frac{1+a_i}{b-1-a_i}-\varepsilon_i}\rbrace$
occur finite times for  $m$.   By the hypothesis,  for the sufficiently large $t_j$,
\begin{eqnarray}
\sigma_{t_j(k+1)+k-s-1}^2&>&r_{t_j(k+1)+k-s-2}^{\frac{1+a_s}{b-1-a_s}-\varepsilon_s}=r_{t_j(k+1)+k-s-2}^{a_{s+1}}, \quad\mbox{a.s.}\quad\mbox{on}\,\,D,\nonumber
\end{eqnarray}
and hence  \dref{ytk2} also holds for $i=s+1$ in view of \dref{ytk}. The assertion is thus proved.

Now, it follows immediately from \dref{ytk2} that for all sufficiently large $t_j \in T$,
 $$ y_{t_j(k+1)}\in S_b^L \quad \mbox{and}\quad \sigma_{t_j(k+1)}^2>r_{t_j(k+1)-1}^{a_k},\quad\mbox{a.s.}\quad\mbox{on}\,\, D.$$
 Denote $G_{m+1}\triangleq\lbrace y_{m+1}\in S_b^L,\sigma_{m+1}^2>r_{m}^{a_k}\rbrace$, then $\sum_{m=1}^{+\infty}I_{G_{m+1}}=+\infty,~\mbox{a.s.}$ on $D$,
as $|T|=\aleph_0$ almost surely.
This means $P\left(D\right)\leq P(\sum_{m=1}^{+\infty}I_{G_{m+1}}=+\infty)$.
Lemma \ref{r/r<} thus becomes straightforward if     we could prove
\begin{equation}\label{pigm}
P\left(\sum_{m=1}^{+\infty}I_{G_{m+1}}=+\infty\right)=0,
\end{equation}
which contradicts to the hypothesis that $P(D)>0$.

To this end, for each sufficiently large $m$, compute
\begin{eqnarray}\label{pgm}
P(G_{m+1}|\mathcal{F}_{m}^y)&=&P\left(y_{m+1}\in S_b^L,f^2(y_{m+1})>r_{m}^{1+a_k}-r_{m}|\mathcal{F}_{m}^y\right)\nonumber\\
&\leq& P\left(y_{m+1}\in S_b^L,|y_{m+1}|>(r_{m}^{1+a_k}-r_{m})^{\frac{1}{2b}}L^{-\frac{1}{b}}\Big|\mathcal{F}_{m}^y\right)\nonumber\\
&\leq& \frac{1}{\sqrt{2\pi}}\int_{|x\cdot\sigma_{m}|\geqslant(r_{m}^{1+a_k}-r_{m})^{\frac{1}{2b}}L^{-\frac{1}{b}}}e^{-\frac{x^2}{2}}\,dx\nonumber\\
&\leq &\frac{1}{\sqrt{2\pi}}\int_{|x\cdot\sqrt{P_0r_{m}}|\geqslant(r_{m}^{1+a_k}-r_{m})^{\frac{1}{2b}}L^{-\frac{1}{b}}}e^{-\frac{x^2}{2}}\,dx=\frac{1}{\sqrt{2\pi}}\int_{|x|\geqslant Q_{m}^{(3)}}e^{-\frac{x^2}{2}}\,dx,
\end{eqnarray}
where $Q_m^{(3)}\triangleq P_0^{-\frac{1}{2}}L^{-\frac{1}{b}}(r_m^{a_k+1-b}-r_m^{1-b})^{\frac{1}{2b}}$.
Since $a_k>b-1$ and   $\liminf_{m\rightarrow +\infty}\frac{r_m}{m}>0$, one has $\liminf_{m\rightarrow +\infty}\frac{Q_m^{(3)}}{\log m}>0$.  By virtue of Lemma \ref{one}, $$\sum_{m=1}^{+\infty}\frac{1}{\sqrt{2\pi}}\int_{|x|\geqslant Q_m^{(3)}}e^{-\frac{x^2}{2}}\,dx
<+\infty.$$
So, \dref{pgm} yields that $\sum_{m=1}^{+\infty}P(G_{m+1}|\mathcal{F}_{m}^y)<+\infty$.
This shows $\sum_{m=1}^{\infty}I_{G_{m+1}}<+\infty$ a.s.
by the \textit{Borel-Cantelli-Levy} theorem and \dref{pigm} follows as desired.
\end{proof}

We are now in a position  to prove Theorem \ref{sta}.

\begin{proof}[Proof of Theorem \ref{sta}]
First of all, if $\ell(\lbrace x:|f(x)|>0\rbrace)=0$, then for any $i\geq 0$,
$
E\left[I_{\lbrace f(y_{i+1})\neq 0\rbrace}|\mathcal{F}_{i}^y\right]=P(f(y_{i+1})\neq 0|\mathcal{F}_{i}^y)=0$,
which yields
\begin{equation}
E\left[\sum_{i=0}^{+\infty}I_{\lbrace f(y_{i+1})\neq 0\rbrace}\right]=\sum_{i=0}^{+\infty}E\lbrace E[I_{\lbrace f(y_{i+1})\neq 0\rbrace}|\mathcal{F}_{i}^y]\rbrace=0.\nonumber
\end{equation}
This infers that $\sum_{i=0}^{+\infty}I_{\lbrace f(y_{i+1})\neq 0\rbrace}=0,~\mbox{a.s.}.$
That is,  with probability $1$, $f(y_{i+1})= 0$ for all $i\geq 0$.  By \dref{ut}, $u_{i+1}\equiv 0, i\geq 0$.   Then, system \dref{sys} reduces to
\begin{equation}\label{yw}
y_{i+2}=w_{i+2},\quad i\geq 0\quad\mbox{a.s.},\nonumber
\end{equation}
and the stabilizability is verified by
$\sum_{i=1}^{t}y_{i}^2=y_1^2+\sum_{i=2}^{t}w_{i}^2=O(t),~ \mbox{as}~t\rightarrow+\infty.$

Therefore, it is sufficient to consider the case where  $\ell(\lbrace x:|f(x)|>0\rbrace)>0$. Taking account of Lemma \ref{r/r<},
\begin{equation}\label{supr1}
\sup\limits_{t} \sigma_{t}<+\infty,\quad \mbox{a.s.}.
\end{equation}
Moreover, recall from \cite[Lemma 3.1]{guo95}  that $\sum_{i=0}^{t}\alpha_i=O(\log r_t)$ a.s.,
where $\alpha_i\triangleq\frac{(\phi_i\tilde{\theta_i})^2}{1+\phi_iP_i\phi_i}$,  \dref{yt} and \dref{supr1} lead to
\begin{eqnarray}\label{logr}
\sum_{i=0}^{t}(y_{i+1}-w_{i+1})^2&=&\sum_{i=0}^{t}(\phi_i\tilde{\theta_i})^2=\sum_{i=0}^{t}\alpha_i\frac{r_i}{r_{i-1}}=O(\log r_t).
\end{eqnarray}
Observe that
\begin{eqnarray}\label{logr2}
\log r_t&\leq &\log\left(P_0^{-1}+(t+1)\max \limits_{0\leqslant i\leqslant t}f^2(y_i)\right)\leq\log\left(P_0^{-1}+(t+1)k_1^2 e^{2k_2\max \limits_{0\leqslant i\leqslant t}|y_i|}\right)\nonumber\\
&=&O(1)+O(\log t)+O\left(\max \limits_{0\leqslant i\leqslant t}|y_i|\right)=O(1)+O(\log t)+O\left(\bigg(\sum_{i=0}^{t}y_i^2\bigg)^{\frac{1}{2}}\right)\nonumber
\end{eqnarray}
and $\sum_{i=0}^{t}2[(y_{i+1}-w_{i+1})^2+w_{i+1}^2]\geq \sum_{i=0}^{t}y_{i+1}^2$,
 \dref{logr}  becomes
\begin{eqnarray}
\frac{1}{2}\sum_{i=0}^{t}y_{i}^2 
&\leq &\sum_{i=0}^{t}(y_{i+1}-w_{i+1})^2+ \sum_{i=0}^{t}w_{i+1}^2+\frac{1}{2}y_0^2=O(\log r_t)+\sum_{i=0}^{t}w_{i+1}^2\nonumber\\
&\leq &O(1)+O(\log t)+O\left(\bigg(\sum_{i=0}^{t}y_i^2\bigg)^{\frac{1}{2}}\right)+\sum_{i=0}^{t}w_{i+1}^2,\quad \mbox{a.s.}.\nonumber
\end{eqnarray}
Moreover, since $\sum_{i=0}^{t}w_{i+1}^2=O(t)$,  as $t\rightarrow+\infty$,
\begin{equation}
\frac{1}{2t}\sum_{i=0}^{t}y_{i}^2=O(1)+O\left(\frac{\log t}{t}\right)+o\left(\frac{1}{t}\sum_{i=0}^{t}y_{i}^2\right)^{\frac{1}{2}},\nonumber
\end{equation}
which is exactly $\frac{1}{t}\sum_{i=0}^{t}y_{i}^2=O(1)$ almost surely.
\end{proof}

\section{Proof of  unstabilizability.}\label{punsta}
Because  Theorem \ref{unsta} is an immediate  corollary of Theorem \ref{unsta1}, we only  provide the proof of Theorem  \ref{unsta1}  here.

\begin{proof}[Proof of Theorem  \ref{unsta1}]
Denote
\begin{eqnarray}
\left\{
\begin{array}{l}
H_0\triangleq\left\lbrace\omega: r_0> e^2,y_0\not\in S_h\right\rbrace,\nonumber\\
H_t\triangleq\left\lbrace\omega:r_t> r_{t-1}^{2+\frac{1}{t}},y_t\not\in S_h \right\rbrace,~~~~~~t\geq1\nonumber\\
\end{array}
\right.
\end{eqnarray}
 and $H\triangleq\bigcap_{t=0}^{+\infty} H_t$. The following argument is mainly devoted to  verifying $P(H)>0$.

Now,
 for any $t\geq 1$,
\begin{eqnarray}\label{jkl}
P(H_{t+1}^c|\mathcal{F}_{t}^y)&=&P\left(\left\lbrace r_{t+1}\leqslant r_{t}^{2+\frac{1}{t+1}},y_{t+1}\not\in S_h \right\rbrace\cup\lbrace y_{t+1}\in S_h \rbrace\Big|\mathcal{F}_{t}^y\right)\nonumber\\
&=&P\left(f^2(y_{t+1})\leqslant r_{t}^{2+\frac{1}{t+1}}-r_t,y_{t+1}\not\in S_h \Big|\mathcal{F}_{t}^y\right)+P(y_{t+1}\in S_h|\mathcal{F}_{t}^y).
\end{eqnarray}
To compute this probability, observe that by the assumption of the theorem, there is a $k_3>0$ such that for  any $ x\in \mathbb{R}$,
\begin{equation}\label{s4l}
\ell(S_h\cap[x-l,x+l])\leq \frac{k_3 l}{(\log(\log l))^{1+\delta}},\quad l\geq 3.
\end{equation}
Denote $J_t \triangleq \frac{2}{\sqrt{2\pi}}\sigma_{t}^{-1}r_{t}^{\frac{1}{4}\left(1+\frac{1}{2(t+1)}\right)}g\Big(r_{t}^{1+\frac{1}{2(t+1)}}\Big)$,
then in view of \dref{mean} and \dref{var},
\begin{eqnarray}\label{jkl1}
&&P\left(f^2(y_{t+1})\leqslant r_{t}^{2+\frac{1}{t+1}}-r_t,y_{t+1}\not\in S_h |\mathcal{F}_{t}^y\right)\nonumber\\
&\leq &P\left(h^2(|y_{t+1}|)\leqslant r_{t}^{2+\frac{1}{t+1}},y_{t+1}\not\in S_h |\mathcal{F}_{t}^y\right)\nonumber\\
&\leq &\frac{1}{\sqrt{2\pi}}\int_{|m_t+x\cdot\sigma_t|\leqslant r_{t}^{\frac{1}{4}\left(1+\frac{1}{2(t+1)}\right)}g\Big( r_{t}^{1+\frac{1}{2(t+1)}}\Big)}e^{-\frac{x^2}{2}}\,dx
\leq  J_t,
\end{eqnarray}
Furthermore,
by \dref{s4l} and letting $K_t\triangleq\frac{1}{\sqrt{2\pi}}\int_{|x|>3\log(t+e)}e^{-\frac{x^2}{2}}\,dx$,
it follows that
\begin{eqnarray}\label{jkl2}
P(y_{t+1}\in S_h|\mathcal{F}_{t}^y)-K_t
&=&\frac{1}{\sqrt{2\pi}}\int_{m_t+x\cdot \sigma_t\in S_h,|x|\leqslant 3\log(t+e)}e^{-\frac{x^2}{2}}\,dx\nonumber\\
&\leq &\frac{\ell(S_h\cap[m_t-3\sigma_t \log(t+e),m_t+3\sigma_t \log(t+e)])}{\sqrt{2\pi}\sigma_t}\nonumber\\
&\leq &\frac{3k_3\log(t+e)}{\sqrt{2\pi}\bigg(\log\log\Big(3\sigma_t\log(t+e)\Big)\bigg)^{1+\delta}}
\triangleq L_t.
\end{eqnarray}
Since
 $r_t\geqslant e^{2^{t+1}},\sigma_t^2>e^{2^{t}}$ on $\bigcap_{i=0}^{t} H_i$, it derives
 \begin{eqnarray}\label{Hisigma}
 \bigcap_{i=0}^{t} H_i\subseteq \left\lbrace r_t> r_{t-1}^{2+\frac{1}{t}},r_t\geqslant e^{2^{t+1}},\sigma_t^2>e^{2^{t}}\right\rbrace,
 \end{eqnarray}
   and hence
\begin{eqnarray}\label{ih}
\prod_{i=1}^{t}I_{H_i}&=&I_{\left\lbrace r_t> r_{t-1}^{2+\frac{1}{t}},r_t\geqslant e^{2^{t+1}}\right\rbrace}\cdot\prod_{i=1}^{t}I_{H_i}=I_{\left\lbrace \sigma_t^2>e^{2^{t}}\right\rbrace}\cdot\prod_{i=1}^{t}I_{H_i}.
\end{eqnarray}
Substituting   \dref{jkl1} and  \dref{jkl2} into \dref{jkl}, one has $P(H_{t+1}^c|\mathcal{F}_{t}^y)\leq J_t+K_t+L_t$ and by \dref{ih},
\begin{eqnarray}\label{pht}
P\left(\bigcap_{i=0}^{t+1} H_i\right)&=&E\left\lbrace E[I_{H_{t+1}}|\mathcal{F}_{t}^y]\prod_{i=0}^{t}I_{H_i}\right\rbrace\geq E\left\lbrace(1-J_t-K_t-L_t)\prod_{i=0}^{t}I_{H_i}\right\rbrace\nonumber\\
&=&E\Bigg\lbrace\prod_{i=0}^{t}I_{H_i}-J_t \cdot I_{\left\lbrace r_t> r_{t-1}^{2+\frac{1}{t}},r_t\geqslant e^{2^{t+1}}\right\rbrace}\prod_{i=0}^{t}I_{H_i}-K_t\prod_{i=0}^{t}I_{H_i}\nonumber\\
&&-L_t\cdot I_{\left\lbrace \sigma_t^2>e^{2^{t}}\right\rbrace}\prod_{i=0}^{t}I_{H_i}\Bigg\rbrace.
\end{eqnarray}
At the same time,
\begin{eqnarray}\label{jti}
J_t\cdot I_{\left\lbrace r_t> r_{t-1}^{2+\frac{1}{t}},r_t\geqslant e^{2^{t+1}}\right\rbrace}&=&\frac{2}{\sqrt{2\pi}}\frac{r_{t}^{\frac{1}{4}\left(\frac{2t+3}{2(t+1)}\right)}g\Big( r_{t}^{1+\frac{1}{2(t+1)}}\Big)}{\sigma_t}\cdot I_{\left\lbrace \sigma_t^2> r_{t}^{\frac{1+t}{1+2t}},r_t\geqslant e^{2^{t+1}}\right\rbrace}\nonumber\\
&\leq &\frac{2}{\sqrt{2\pi}}\frac{r_{t}^{\frac{1}{4}\left(\frac{2t+3}{2(t+1)}\right)}g\Big(r_{t}^{1+\frac{1}{2(t+1)}}\Big)}{\sqrt{r_{t}^{\frac{1+t}{1+2t}}}}\cdot I_{\left\lbrace\sigma_t^2> r_{t}^{\frac{1+t}{1+2t}},r_t\geqslant e^{2^{t+1}}\right\rbrace}\nonumber\\
&=&\frac{2}{\sqrt{2\pi}}\dfrac{g\Big(r_{t}^{1+\frac{1}{2(t+1)}}\Big)}{r_t^{\frac{1}{8(1+t)(1+2t)}}}\cdot I_{\left\lbrace r_t\geqslant e^{2^{t+1}},\sigma_t^2> r_{t}^{\frac{1+t}{1+2t}}\right\rbrace}\nonumber\\
&\leq &\frac{2}{\sqrt{2\pi}}\dfrac{g\Big(r_{t}^{1+\frac{1}{2(t+1)}}\Big)}{r_t^{\left(1+\frac{1}{2(t+1)}\right)\frac{1}{16(t+1)^2}}}\cdot I_{\left\lbrace r_t> r_{t-1}^{2+\frac{1}{t}},r_t\geqslant e^{2^{t+1}}\right\rbrace}\nonumber\\
&\leq &\frac{2}{\sqrt{2\pi}}{\sup\limits_{x\in[e^{2^{t+1}},+\infty)}x^{-\frac{1}{16(t+1)^2}}g(x)}\cdot I_{\left\lbrace r_t> r_{t-1}^{2+\frac{1}{t}},r_t\geqslant e^{2^{t+1}}\right\rbrace}.
\end{eqnarray}

Denote
\begin{eqnarray}\label{jt1}
&&J_t^{(1)}\triangleq \frac{2}{\sqrt{2\pi}}{\sup\limits_{x\in[e^{2^{t+1}},+\infty)}x^{-\frac{1}{16(t+1)^2}}g(x)},\label{lt1}\\
&&L_t^{(1)}\triangleq \frac{3k_3\log(t+e)}{\sqrt{2\pi}(\log\log(3e^{2^{t-1}}\log(t+e)))^{1+\delta}},
\end{eqnarray}
then $\sum_{t=1}^{+\infty}J_t^{(1)}<+\infty$   because of  \dref{utasi} and
\begin{eqnarray}\label{Lt1}
L_t\leq L_t^{(1)}=O\left(\frac{\log t}{t^{1+\delta}}\right)\quad \mbox{on}\quad {\left\lbrace \sigma_t^2> e^{2^{t}}\right\rbrace}
\end{eqnarray}
due to \dref{jkl2} that
\begin{eqnarray*}\label{lti}
L_t\cdot I_{\left\lbrace \sigma_t^2> e^{2^{t}}\right\rbrace}
\leq \frac{3k_3\log(t+e)}{\sqrt{2\pi}(\log\log(3e^{2^{t-1}}\log(t+e)))^{1+\delta}}\cdot I_{\left\lbrace \sigma_t^2> e^{2^{t}}\right\rbrace}.\quad
\end{eqnarray*}
In addition,
Lemma \ref{one}  yields $\sum_{t=1}^{+\infty}K_t<+\infty$, hence
\begin{equation}\label{sumjkl}
\sum_{t=1}^{+\infty}(J_t^{(1)}+K_t+L_t^{(1)})<+\infty,\quad \mbox{a.s.}.
\end{equation}
Applying \dref{jti}--\dref{Lt1}, \dref{pht} reduces to
\begin{eqnarray}
P\left(\bigcap_{t=0}^{t+1} H_t\right)&\geq &E\Bigg\lbrace \prod_{i=0}^{t}I_{H_i}-J_t^{(1)}\cdot I_{\left\lbrace r_t> r_{t-1}^{2+\frac{1}{t}},r_t\geqslant e^{2^{t+1}}\right\rbrace}\prod_{i=0}^{t}I_{H_i}-K_t\prod_{i=0}^{t}I_{H_i}\nonumber\\
&&-L_t^{(1)}\cdot I_{\left\lbrace \sigma_t^2>e^{2^{t}}\right\rbrace}\prod_{i=0}^{t}I_{H_i}\Bigg\rbrace=\left(1-J_t^{(1)}-K_t-L_t^{(1)}\right)P\Bigg(\bigcap_{t=0}^{t} H_t\Bigg).\nonumber
\end{eqnarray}
Since \dref{sumjkl} implies that there is a $N_1>0$ such that for all $t\geq N_1$, $J_t^{(1)}+K_t+L_t^{(1)}<1$, it follows that
\begin{eqnarray}
P\Bigg(\bigcap_{t=0}^{+\infty} H_t\Bigg)&=&\lim_{t=N_1}^{+\infty}P\Bigg(\bigcap_{j=0}^{t+1} H_j\Bigg)\geq \lim_{t=N_1}^{+\infty}\Bigg(\prod_{i=N_1}^t\left(1-J_i^{(1)}-K_i-L_i^{(1)}\right)\Bigg) P\Bigg(\bigcap_{j=0}^{N_1} H_j\Bigg)\nonumber\\
&=&\prod_{i=N_1}^{+\infty}\left(1-J_i^{(1)}-K_i-L_i^{(1)}\right)P\Bigg(\bigcap_{j=0}^{N_1} H_j\Bigg)>0,\nonumber
\end{eqnarray}
which is exactly $P(H)>0$.

On the other hand, by \dref{Hisigma}, $\sigma_t^2>e^{2^t},\forall t\geq 1,$ on $H$.
Therefore,  for any $C>0$,
\begin{eqnarray}
&&P\Bigg(\Bigg(\bigcap_{i=0}^{t} H_i\Bigg)\cap\lbrace |y_{t+1}|<C\rbrace\bigg|\mathcal{F}_{t}^y\Bigg)=I_{\bigcap_{i=0}^{t} H_i}\cdot P(|y_{t+1}|<C|\mathcal{F}_{t}^y)\nonumber\\
&=&I_{\bigcap_{i=0}^{t} H_i}\cdot  \frac{1}{\sqrt{2\pi}}\int_{|m_t+x\cdot\sigma_t|<C}e^{-\frac{x^2}{2}}\,dx\leq I_{\left\lbrace\sigma_t^2>e^{2^t}\right\rbrace}\cdot \frac{1}{\sqrt{2\pi}}\frac{2C}{\sigma_t}\leq \frac{1}{\sqrt{2\pi}}\frac{2C}{e^{2^{t-1}}},\nonumber
\end{eqnarray}
and hence
\begin{equation}
\sum_{t=1}^{+\infty}P\Bigg(\Bigg(\bigcap_{i=0}^{t} H_i\Bigg)\cap\lbrace |y_{t+1}|<C\rbrace\bigg|\mathcal{F}_{t}^y\Bigg)<+\infty\quad\mbox{on}\,\,H.\nonumber
\end{equation}
Invoking the \textit{Borel-Cantelli-Levy} theorem, one has
\begin{eqnarray}\label{last}
&&\sum_{t=1}^{+\infty}I_{\bigcap_{i=0}^{t} H_i}\cdot I_{\lbrace |y_{t+1}|<C\rbrace}=\sum_{t=1}^{+\infty}I_{\lbrace |y_{t+1}|<C\rbrace\cap\left(\bigcap_{i=0}^{t} H_i\right)}
<+\infty,\quad \mbox{a.s.}\quad\mbox{on}\,\,H.
\end{eqnarray}
Note that $I_{\bigcap_{i=0}^{t}H_i}=1$ on $H$ for every $ t\geq 1$, $\sum_{t=1}^{+\infty}I_{\lbrace |y_{t+1}|<C\rbrace}<+\infty$
almost surely on $H$, in view of \dref{last}.  This infers that $\liminf_{t\rightarrow +\infty}|y_{t}|\geq C$ on $H$, and consequently, $\lim_{t\rightarrow +\infty}|y_t|=+\infty$ on $H$  by   letting $C\rightarrow +\infty$. So, considering $P(H)>0$,
\begin{equation}
\frac{1}{t}\sum_{i=1}^{t}y_i^2\rightarrow\infty\quad\mbox{as}\quad t\rightarrow +\infty,\quad \mbox{on}\,\,H,\nonumber
\end{equation}
establishing the result.
\end{proof}

\section{Concluding remarks.}\label{conre}

In the very beginning,   the work was intended to seek a connection between the measure of $S^L_b$ and the stabilizability of stochastic parameterized systems in discrete time. But the finding is interesting, as it turns out. It suggests that a discrete-time control law is also capable to deal with high nonlinearity. This paper, of course, is just a starting point to provide some preliminary results for the scalar-parameter case. It calls for further investigations on this topic.

%





\appendix
\section{Proof of Theorem \ref{sta2}}\label{AppA}
This appendix is addressed to prove  Theorem \ref{sta2}. Some lemmas are necessary.

\begin{lemma}\label{seven}
Let $x_{\min}\leq x_{\max}$ denote the two solutions of equation $x^2-(a-2)x+1=0$ and
$\ell(\lbrace x:|f(x)|>0\rbrace)>0$. Under (i) of Theorem \ref{sta2}, then \\
(i) $D_1=D_2$ with $D_1\triangleq {\left\lbrace\sup\limits_{t}\sigma_t=+\infty\right\rbrace}$ and $ D_2\triangleq{\left\lbrace\liminf\limits_{t\rightarrow +\infty}\frac{\log r_t}{\log r_{t-1}}\geq 1+x_{\min}\right\rbrace}$;\\
(ii)  $P(D_3)=0$ with $D_3\triangleq {\left\lbrace\limsup_{t\rightarrow +\infty}\frac{\log r_t}{\log r_{t-1}}>1+x_{\max}\right\rbrace}$.
\end{lemma}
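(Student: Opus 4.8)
The plan is to recast everything in terms of the single ratio $q_t\triangleq\log r_t/\log r_{t-1}$, using the identity $\sigma_t^2=r_t/r_{t-1}$ from \dref{var}, so that ``$\sigma_t^2>r_{t-1}^{c}$'' is exactly ``$q_t>1+c$''. Under hypothesis (i) of Theorem \ref{sta2} the bound $|f(x)|\le l_1+l_2|x|^a$ holds for \emph{all} $x$ with suitable $l_1,l_2$, i.e. $S_a(l_1,l_2)=\mathbb{R}$, so the indicator $I_{\{y_{m+1}\in S_a(l_1,l_2)\}}$ in Lemma \ref{three} is identically $1$ and the $B$- and $C$-events there become pure statements about $\sigma_m^2,\sigma_{m+1}^2$. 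Since $\ell(\{|f|>0\})>0$, Lemma \ref{five} gives $\liminf_t r_t/t>0$ a.s., which feeds Lemma \ref{three} and forces $r_t\to+\infty$; as a polynomial bound implies the exponential bound \dref{fk1k2}, Lemma \ref{four} applies and yields $\sigma_t\to+\infty$ a.s. on $D_1$. The crucial algebraic link is that the recursion $a_{i+1}=\frac{1+a_i}{a-1-a_i}-\varepsilon_i$ of Lemma \ref{six} (with $b$ replaced by $a$) has fixed points solving $x^2-(a-2)x+1=0$, so its $q$-level fixed points are precisely $1+x_{\min}$ and $1+x_{\max}$.

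For part (i) the inclusion $D_2\subseteq D_1$ is immediate: on $D_2$ one has $q_t\ge 1+x_{\min}-\varepsilon>1$ eventually, hence $r_t\ge r_{t-1}^{1+x_{\min}-\varepsilon}$ and $\sigma_t^2\ge r_{t-1}^{x_{\min}-\varepsilon}\to+\infty$ because $x_{\min}>0$ and $r_{t-1}\to+\infty$. For the reverse inclusion $D_1\subseteq D_2$ I would bootstrap on $D_1$. Starting from $\sigma_{m+1}^2\ge\lambda$ for all large $m$ (Lemma \ref{four}), finiteness of the $C$-events (Lemma \ref{three}) forces $\sigma_m^2>r_{m-1}^{a_1}$ for all large $m$ with $a_1=\tfrac{1}{a-1}-\varepsilon_0$; feeding $\sigma_{m+1}^2>r_m^{a_i}$ into the $B$-events gives, inductively, $\sigma_m^2>r_{m-1}^{a_{i+1}}$ for all large $m$. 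Lemma \ref{six}(iii) (with $a_0=0$) supplies $a_i\nearrow x_{\min}$, so $\liminf_m q_m\ge 1+a_i$ for every $i$, whence $\liminf_m q_m\ge 1+x_{\min}$.

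For part (ii) I would argue by contradiction, assuming $P(D_3)>0$. The governing observation is that a direct tail estimate of $\{q_{m+1}>1+c\}$ is summable only when $c>a-1$: the event forces $|y_{m+1}|\gtrsim r_m^{(1+c)/(2a)}$, and since $\sigma_m\le\sqrt{P_0 r_m}$ the required standard-normal deviation is $\gtrsim r_m^{(1+c-a)/(2a)}$, which grows (so Lemma \ref{one} applies) exactly when $1+c>a$. At the fixed point $c=x_{\max}$ one checks $1+x_{\max}<a$, so the estimate fails there — this is the main obstacle — and it is bridged by bootstrapping. On $D_3$ the excursion $\sigma_t^2>r_{t-1}^{x_{\max}+\delta}$ happens infinitely often for some $\delta>0$; taking $a_0=x_{\max}+\delta\in(x_{\max},a-1)$, Lemma \ref{six}(ii) produces a finite chain $a_0<\cdots<a_{k-1}<a-1<a_k$, and propagating each excursion backwards through the $B$-events (finite by Lemma \ref{three}) upgrades it to $\sigma_{t-k}^2>r_{t-k-1}^{a_k}$ infinitely often, i.e. $\{q_{t-k}>1+a_k\}$ with $1+a_k>a$. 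Now the direct estimate does apply: with $G_{m+1}\triangleq\{\sigma_{m+1}^2>r_m^{a_k}\}$, the bound above and Lemma \ref{one} give $\sum_m P(G_{m+1}\mid\mathcal{F}_m^y)<+\infty$, so Borel--Cantelli--Levy yields $\sum_m I_{G_{m+1}}<+\infty$ a.s., contradicting the infinitely-often conclusion and forcing $P(D_3)=0$.

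The essential difficulty, as indicated, is the sub-criticality of the fixed point $1+x_{\max}$ relative to the threshold $a$ at which the Gaussian tail first becomes summable; the sole purpose of Lemma \ref{six} is to manufacture, from the trivial lower bound $a_0=0$ (respectively from an arbitrarily small overshoot past $x_{\max}$), a convergent (respectively finite) ladder of exponents that carries the estimate across this gap. A secondary point is that part (i) needs the bootstrap to hold uniformly in $m$ in order to extract a $\liminf$, which is legitimate because on $D_1$ one has $\sigma_{m+1}^2\ge\lambda$ for \emph{all} large $m$, whereas part (ii) supplies only an infinitely-often excursion, so there the ladder must be finite and the contradiction drawn at its top.
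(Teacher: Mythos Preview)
Your proposal is correct and follows essentially the same route as the paper's proof: Lemma~\ref{five} for $r_t\to\infty$, Lemma~\ref{four} for $\sigma_t\to\infty$ on $D_1$, the bootstrap via the $C$- and $B$-events of Lemma~\ref{three} with $S_a(l_1,l_2)=\mathbb{R}$, and Lemma~\ref{six}(iii)/(ii) to supply the exponent ladders converging to $x_{\min}$ (for part (i)) and overshooting $a-1$ in finitely many steps (for part (ii)). The only point worth tightening is in part (ii): on $D_3$ the overshoot $\delta$ past $x_{\max}$ is a priori random, so the paper explicitly decomposes $D_3\subseteq\bigcup_{l\ge 2}\{\sigma_m^2>r_{m-1}^{a_{0,l}}\ \text{i.o.}\}$ with deterministic $a_{0,l}=x_{\max}+l^{-1}(a-1-x_{\max})\in(x_{\max},a-1)$, runs the finite ladder for each fixed $l$, and sums; your phrase ``for some $\delta>0$'' should be read in this countable-union sense.
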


\begin{proof}
To prove (i), note that $\ell(\lbrace x:|f(x)|>0\rbrace)>0$ implies $\liminf_{t\to+\infty}\frac{r_t}{t}>0$ almost surely  by Lemma \ref{five}, so it is enough to show $D_1\subseteq D_2$.
As a matter of fact, in view of Lemma \ref{four}, $\lim_{t\rightarrow +\infty}\sigma_t=+\infty$ on $D_1$ almost surely.
That is, for any $\lambda>1,$ there is a random $N>0$ such that
\begin{equation}\label{rtb}
\sigma_t^2>\lambda\quad\mbox{if}\quad  t>N,\quad\mbox{a.s.}\quad \mbox{on}\,\,D_1.
\end{equation}
Furthermore, according to (iii) of Lemma \ref{six}, there is an infinite sequence $\lbrace a_n\rbrace_{n\geq 0}$ satisfying $0=a_0<\ldots<a_{n}<\ldots<x_{\min}$, $\lim_{n\rightarrow +\infty}a_n=x_{\min}$ and
\begin{equation}\label{an}
a_{n+1}=\frac{1+a_n}{a-1-a_n}-\varepsilon_n,
\end{equation}
where $\varepsilon_n\in (0,1)$ for all $n\geq0$.
 We  use the induction method to prove that for each $ n\geq 1$,  when $m$ is sufficiently large,
\begin{equation}\label{induc}
\sigma_m^2>r_{m-1}^{a_n},\quad\mbox{a.s.}\quad\mbox{on}\,\, D_1.
\end{equation}

Observe that $|f(x)|< l_1+l_2 |x|^a$ for some $l_1,l_2>0$, where  $x\in\mathbb{R}$.
When $n=1$, Lemma \ref{three} with  $\varepsilon=\varepsilon_0$  indicates that the events ${\lbrace\sigma_{m+1}^2\geqslant\lambda,\sigma_m^2\leqslant r_{m-1}^{\frac{1}{a-1}-\varepsilon_0}\rbrace}$
occur  finite times for $m$. Together with \dref{rtb}, it infers that for all sufficiently large $m$, $\sigma_m^2> r_{m-1}^{\frac{1}{a-1}-\varepsilon_0}=r_{m-1}^{a_1}$ almost surely. Now, assume that \dref{induc} holds for some $n\geq 1$, whenever $m$  is sufficiently large. We  prove it  for $n+1$.
Applying Lemma \ref{three} again with $\varepsilon=\varepsilon_n$ and $q=a_n$, events $\lbrace \sigma_{m+1}^2\geqslant r_m^{a_n},\sigma_m^2\leqslant r_{m-1}^{\frac{1+a_n}{a-1-a_n}-\varepsilon_n}\rbrace$
occur  finite times for $m$ as well. So,  for all sufficiently large $m$,
\begin{equation}
\sigma_m^2> r_{m-1}^{\frac{1+a_n}{a-1-a_n}-\varepsilon_n}=r_{m-1}^{a_{n+1}},\quad\mbox{a.s.}\quad\mbox{on}\,\,D_1.\nonumber
\end{equation}
We thus in fact have verified  \dref{induc}  for all $n$, when  $m$ is sufficiently large. This means
\begin{equation}
\liminf_{t\rightarrow +\infty}\frac{\log r_t}{\log r_{t-1}}\geq 1+a_n,\quad\forall n\geq 1,\quad\mbox{a.s.}\quad\mbox{on}\,\,D_1,\nonumber
\end{equation}
and by letting $n\rightarrow +\infty$,
\begin{equation}\label{liminf}
\liminf_{t\rightarrow +\infty}\frac{\log r_t}{\log r_{t-1}}\geq 1+x_{\min},\quad\mbox{a.s.}\quad\mbox{on}\,\,D_1.
\end{equation}

Next,  we show (ii).   For each integer $l\geq 2$,  denote $a_{0,l}\triangleq x_{\max}+l^{-1}(a-1-x_{\max})\in(x_{\max},a-1)$.  According to (ii) of Lemma \ref{six},  there exists a finite sequence $\lbrace a_{i,l}\rbrace_{i=1}^{k_l}$ with some integer $k_l\geq1$ depending on $a_{0,l}$
satisfying $x_{\max}<a_{0,l}<\ldots<a_{k_l-1,l}<a-1,~~a_{k_l,l}>a-1$, and
\begin{equation}
a_{i+1,l}=\frac{1+a_{i,l}}{a-1-a_{i,l}}-\epsilon_{i,l},\nonumber
\end{equation}
where $\epsilon_{i,l}\in(0,1)$ for $0\leq i\leq k_l-1$.
Similar to the induction argument of \dref{induc}, we can prove that for all $l\geq 2$ and $0\leq n\leq k_l$, $$\lbrace\sigma_m^2>r_{m-1}^{a_{0,l}},\,\,\mbox{i.o.}\rbrace\subset\lbrace\sigma_m^2>r_{m-1}^{a_{n,l}},\,\,\mbox{i.o.}\rbrace.$$
Suppose $P(D_3)>0$. Then, there is a random number $a_0$ taking values  from    $\lbrace a_{0,l}, l\in \mathbb{N}^+  \backslash \{1\} \rbrace$ such that  $\sigma_{m+1}^2> r_{m}^{a_0}$ for infinitely many  $m$   on $D_3$. Hence $$P(D_3)\leq \sum_{l=2}^{\infty}P(\sigma_m^2>r_{m-1}^{a_{0,l}},\,\,\mbox{i.o.})\leq \sum_{l=2}^{\infty}P(\sigma_m^2>r_{m-1}^{a_{k_l,l}},\,\,\mbox{i.o.}),$$
and $P(D_3)=0$  will hold if we could show that
\begin{equation}\label{bk0}
P\left(\sigma_m^2>r_{m-1}^{a_{k_l,l}},\,\,\mbox{i.o.}\right)=0,\quad \forall l\geq 2.
\end{equation}

Indeed, for any  $a_{k_l,l}$ and sufficiently large $m$,  $(r_{m}^{1+a_{k_l,l}}-r_{m})^{\frac{1}{2}}-l_1>0$
and
\begin{eqnarray}\label{pbk}
P\left(\sigma_{m+1}^2>r_{m}^{a_{k_l,l}}\Big|\mathcal{F}_{m}^y\right)&=&P\left(f^2(y_{m+1})>r_{m}^{1+a_{k_l,l}}-r_{m}|\mathcal{F}_{m}^y\right)\nonumber\\
&\leq &P\left(|y_{m+1}|>\gamma_2^{-\frac{1}{a}}((r_{m}^{1+a_{k_l,l}}-r_{m})^{\frac{1}{2}}-\gamma_1)^{\frac{1}{a}}\Big|\mathcal{F}_{m}^y\right)\nonumber\\
&\leq &\frac{1}{\sqrt{2\pi}}\int_{|x\cdot\sigma_m|\geqslant \gamma_2^{-\frac{1}{a}}((r_{m}^{1+a_{k_l,l}}-r_{m})^{\frac{1}{2}}-\gamma_1)^{\frac{1}{a}}}e^{-\frac{x^2}{2}}\,dx\nonumber\\
&\leq &\frac{1}{\sqrt{2\pi}}\int_{|x\cdot\sqrt{P_0r_{m}}|\geqslant\gamma_2^{-\frac{1}{a}}((r_{m}^{1+a_{k_l,l}}-r_{m})^{\frac{1}{2}}-\gamma_1)^{\frac{1}{a}}}e^{-\frac{x^2}{2}}\,dx\nonumber\\
&=&\frac{1}{\sqrt{2\pi}}\int_{|x|\geqslant Q_{m}^{(4)}}e^{-\frac{x^2}{2}}\,dx,
\end{eqnarray}
where $Q_m^{(4)}\triangleq P_0^{-\frac{1}{2}}\gamma_2^{-\frac{1}{a}}(r_m^{a_{k_l,l}+1-a}-r_m^{1-a})^{\frac{1}{2a}}(1-\gamma_1(r_{m}^{1+a_{k_l,l}}-r_{m})^{-\frac{1}{2}})^{\frac{1}{a}}$.
Furthermore, since $\liminf_{m\rightarrow +\infty}\frac{r_m}{m}>0$ and $a_{k_l,l}>a-1$, one has  $\liminf_{m\rightarrow +\infty}\frac{Q_m^{(4)}}{\log m}>0$. By Lemma \ref{one},  $\sum_{m=1}^{+\infty}\frac{1}{\sqrt{2\pi}}\int_{|x|\geqslant Q_m^{(4)}}e^{-\frac{x^2}{2}}\,dx<+\infty$,
and hence \dref{pbk} leads to $$\sum_{m=1}^{+\infty}P\left(\sigma_{m+1}^2>r_{m}^{a_{k_l,l}}|\mathcal{F}_{m}^y\right)<+\infty.$$
So, the  \textit{Borel-Cantelli-Levy} theorem yields
\dref{bk0} and  $P(D_3)=0$ follows.
\end{proof}

\begin{proof}[Proof of Theorem \ref{sta2}]
Suppose $P(D_1)>0$, where $D_1$ is defined in Lemma \ref{seven}. It suffices to consider Theorem \ref{sta2} for $\ell(\lbrace x:|f(x)|>0\rbrace)>0$. Since $b<(1+x_{\min})^2$ and $x_{\min}x_{\max}=1$, there exist some $\delta_1,\delta_2\in(0,x_{\min})$ such that
\begin{equation}\label{1j}
\frac{1+x_{\min}-\delta_2}{\frac{x_{\max}+\delta_1}{1+x_{\max}+\delta_1}}>b.
\end{equation}
By Lemma \ref{seven},   for all sufficiently large $m$,
\begin{equation}\label{21}
\sigma_{m+1}^2>r_m^{x_{\min}-\delta_2},\quad\sigma_{m}^2<r_{m-1}^{x_{\max}+\delta_1},\quad\mbox{a.s.}\quad\mbox{on}\,\,D_1.
\end{equation}
However, by letting $Q_{m}^{(5)}\triangleq{L^{-\frac{1}{b}}((r_{m}^{1+x_{\min}-\delta_2}-r_{m})^{\frac{1}{2}})^{\frac{1}{b}}}\cdot r_{m}^{-\frac{1}{2}\frac{x_{\max}+\delta_1}{1+x_{\max}+\delta_1}}$,
\begin{eqnarray}
&&P\left( y_{m+1}\in S_b^{L}, \sigma_{m+1}^2>r_m^{x_{\min}-\delta_2},\sigma_{m}^2<r_{m-1}^{x_{\max}+\delta_1}\right)\nonumber\\
&= &P\left( y_{m+1}\in S_b^{L}, f^2(y_{m+1})>r_m^{1+x_{\min}-\delta_2}-r_m,\sigma_{m}^2<r_{m-1}^{x_{\max}+\delta_1}\right)\nonumber\\
&\leq & I_{\lbrace\sigma_{m}^2<r_{m-1}^{x_{\max}+\delta_1}\rbrace}\cdot\frac{1}{\sqrt{2\pi}}\int_{|x\cdot\sigma_{m}|\geqslant L^{-\frac{1}{b}}((r_{m}^{1+x_{\min}-\delta_2}-r_{m})^{\frac{1}{2}})^{\frac{1}{b}}}e^{-\frac{x^2}{2}}\,dx\nonumber\\
&\leq &\frac{1}{\sqrt{2\pi}}\int_{|x|\geqslant Q_{m}^{(5)}}e^{-\frac{x^2}{2}}\,dx.\nonumber
\end{eqnarray}
Since \dref{1j} and $\liminf_{m\rightarrow +\infty}\frac{r_m}{m}>0$ implies $\liminf_{m\rightarrow +\infty}\frac{Q_m^{(5)}}{\log m}>0$,
by Lemma \ref{one} and the \textit{Borel-Cantelli-Levy} theorem, events $\lbrace y_{m+1}\in S_b^{L}, \sigma_{m+1}^2>r_m^{x_{\min}-\delta_2},\sigma_{m}^2<r_{m-1}^{x_{\max}+\delta_1}\rbrace$ occur finite times for $m$. This fact  together with \dref{21} yileds that  $y_{m+1}\not\in S_{b}^{L}$ for all sufficiently large $m$   on $D_1$ almost surely.
Then,
\begin{equation}\label{ds}
\sum_{m=1}^{+\infty}P(y_{m+1}\in S_{b}^{L}|\mathcal{F}_{m}^{y})<+\infty,\quad\mbox{a.s.}\quad\mbox{on}\,\,D_1.
\end{equation}
On the other hand, Lemma \ref{seven} implies
\begin{equation}
\sup_{m\geq 1}\frac{\log r_{m}}{\log r_{m-1}}<+\infty,\quad\mbox{a.s.}\quad\mbox{on}\,\,D_1,\nonumber
\end{equation}
thus there is a random number $M_1>1$ such that for all $m\geq 1$,
\begin{equation}
\log \sigma_{m}\leq \log r_{m}<M_1^{m},\quad\mbox{a.s.}\quad\mbox{on}\,\,D_1.\nonumber
\end{equation}
Moreover, Lemma \ref{four} yields that $\lim_{m\rightarrow+\infty}\sigma_m=+\infty$ on $D_1$. So, there is a positive constant $M_2$ such that for all sufficiently large $m$,
\begin{equation}
\frac{\ell(S_b^L\cap [-\sigma_m,\sigma_m])}{\sigma_m}\geq \frac{M_2}{\log(\log \sigma_m)}\geq \frac{M_2}{m\log M_1}\quad \mbox{on} \,\,D_1.\nonumber
\end{equation}
As a result,
\begin{eqnarray}
\sum_{m=1}^{+\infty}P(y_{m+1}\in S_{b}^{L}|\mathcal{F}_{m}^{y})&=&\frac{1}{\sqrt{2\pi}}\sum_{m=1}^{+\infty}\int_{|x\sigma_m|\in S_b^L}e^{-\frac{x^2}{2}}\,dx\nonumber\\
&\geq &\frac{1}{\sqrt{2\pi}}\sum_{m=1}^{+\infty}\int_{|x\sigma_m|\in S_b^L,|x|\leqslant 1}e^{-\frac{x^2}{2}}\,dx\nonumber\\
&\geq &\sum_{m=1}^{+\infty}\ell(x:|x\sigma_m|\in S_b^L,|x|\leqslant 1)\cdot \frac{1}{\sqrt{2\pi}}e^{-\frac{1}{2}}\nonumber\\
&=&\sum_{m=1}^{+\infty}\dfrac{\ell(S_b^L\cap [-\sigma_m,\sigma_m])}{\sigma_m}\cdot \frac{1}{\sqrt{2\pi}}e^{-\frac{1}{2}}.\nonumber\\
&\geq &\frac{1}{\sqrt{2\pi}}e^{-\frac{1}{2}}\cdot\frac{M_2}{M_1}\sum_{m=1}^{+\infty}\frac{1}{m}=+\infty\quad \mbox{on}\,\, D_1,\nonumber
\end{eqnarray}
which contradicts to \dref{ds}. So,  $P(D_1)=0$, that is,  $\sup_{t}\sigma_t<+\infty$ almost surely.  The remainder of the proof  is thus  similar to that of Theorem \ref{sta}.
\end{proof}

\bibliographystyle{siamplain}
\bibliography{references}

\end{document}